\documentclass[11pt]{amsart}

\setlength{\textwidth}{14.cm}
\setlength{\textheight}{21.5cm}

\usepackage{amssymb,amsmath,amsbsy,amsfonts,mathrsfs,amscd,eucal,bm,tcolorbox,stmaryrd}
\usepackage{subcaption}

\usepackage[colorlinks,citecolor=cyan]{hyperref}

\newtheorem{theorem}{Theorem}[section]
\newtheorem{lemma}[theorem]{Lemma}

\newtheorem{definition}[theorem]{Definition}
\newtheorem{remark}[theorem]{Remark}

\newcommand{\calP}{\mathcal{P}}

\newcommand{\cald}{d}
\newcommand{\calD}{\mathcal{D}}
\newcommand{\calI}{\mathcal{I}}
\newcommand{\bx}{\boldsymbol{x}}

\newcommand{\frakh}{\mathfrak{h}}

\newcommand{\bR}{\mathbb{R}}

\newcommand{\dive}{{\ensuremath\mathop{\mathrm{div}\,}}}
\newcommand{\hypothesis}{{\ensuremath\mathop{\mathrm{Assumption~(A1)}}}}

\title[Bound-preserving nodal values]{A nodally bound-preserving finite element method}

\author[G.R. Barrenechea]{Gabriel R. Barrenechea}
\address[G.R.B.]{Department of Mathematics and Statistics, University of Strathclyde, 26 Richmond Street, Glasgow G1 1XH, Scotland. {\tt gabriel.barrenechea@strath.ac.uk} } 

\author[E. Georgoulis]{Emmanuil Georgoulis}
\address[E.G.]{1.
  School of Mathematical \& Computer Sciences,
	Heriot Watt University, United Kingdom, 
	2. Department of Mathematics, School of Applied Mathematical and Physical Sciences, National Technical University of Athens, Greece, and
	3. IACM-FORTH, Greece {\tt E.Georgoulis@hw.ac.uk}} 

\author[T. Pryer]{Tristan Pryer}
\address[T.P.]{Department of Mathematical Sciences, University of Bath, Claverton down, Bath BA2 7AY, UK {\tt tmp38@bath.ac.uk} } 

\author[A. Veeser]{Andreas Veeser}
\address[A.V.]{Dipartimento di Matematica, Universit\`a degli Studi di Milano, Via C. Saldini 50, 20133 Milano, Italy
 {\tt andreas.veeser@unimi.it} }

\begin{document}

\maketitle

\begin{abstract}
This work proposes a nonlinear finite element method whose nodal values preserve bounds known for the exact solution. The discrete problem involves a nonlinear projection operator mapping arbitrary nodal values into bound-preserving ones and seeks the numerical solution in the range of this projection. As the projection is not injective, a stabilisation based upon the complementary projection is added in order to restore well-posedness. Within the framework of elliptic problems, the discrete problem may be viewed as a reformulation of a discrete obstacle problem, incorporating the inequality constraints through Lipschitz projections.
	
The derivation of the proposed method is exemplified for linear and nonlinear reaction-diffusion problems.  {Near-best approximation results in suitable norms are established. In particular, we prove that, in the linear case, the numerical solution is the best approximation in the energy norm among all nodally bound-preserving finite element functions.}

A series of numerical experiments for such problems showcase the good behaviour of the proposed bound-preserving finite element method. 
\end{abstract}

\section{Introduction}

Structure-preserving numerical methods have been an overarching theme in computational partial differential equations (PDEs) over the
years. By structure-pre\-serv\-ing we mean methods that produce approximations satisfying certain desired properties of the underlying exact problem, e.g., local conservation,  entropy
inequalities, maximum principle, pointwise divergence-free constraints, or exactly
symmetric stress tensor approximations, just to name a few.

Numerical methods satisfying {\it Discrete Maximum Principles (DMP)} and/or {\it monotonicity} properties have been studied extensively in the finite element literature; see \cite{Ciarlet70,CR73,Kik77,MH85,BE05,BKK08, BBK17,BJK17} for a (very) non-exhaustive list, and \cite{BJK22} for a recent review and more related references. Methods satisfying the latter properties imply {\it positivity preservation} or, more generally, {\it bound preservation}  of the resulting numerical solutions. 

Crucially, however, bound-preservation appears to be a weaker structural requirement than the DMP. 
Nonetheless, bound-preserving numerical solutions are indispensable for the numerical stability of many complex nonlinear phenomena modelled by systems of PDEs. For instance, many such PDE models are valid only for positive solutions of the  constituent equations. Examples include nonlinear reaction-diffusion systems modelling concentrations of reactants, or turbulence-inducing fields. Also, phase-field PDE models are usually characterised by solutions satisfying pointwise global maxima and minima. Although failure to preserve the same bounds for the numerical solutions is not typically catastrophic for scalar PDE problems, they may have a compounding effect when such PDEs are part of a more complex system of equations.

There is thus an interest in bound-preserving methods which, when appropriately designed, may be less ``stiff''    than current schemes striving to satisfy the DMP or monotonicity.

This work proposes a new finite element method in this spirit. To this end, assume that we are given a partial differential equation (with its associated weak form), and  a Lagrange finite element space of arbitrary order.
Moreover, for simplicity, assume that the exact solution is positive, i.e.\ the zero function is a lower bound.
Then the formal derivation of the new method can be summarized by the following three steps:
\begin{itemize}
	\item Introduce a projection operator 
	$v_h \mapsto v_h^+$ on the finite element space so that the nodal values of the projected function $v_h^+$ are positive.
	\item As usual, replace the test space in the weak formulation of the continuous problem with the finite element space, but, instead of looking for a finite element function as the numerical solution, seek for a projected one in the form 
	$u_h^+$.
	\item Add a stabilisation based upon the complementary projection 
	$v_h \mapsto v_h^-$ onto the finite element space.
\end{itemize}
Notice that the projection operator and so the proposed method are nonlinear. The latter is expected in light of the classical ``Godunov barrier'' principle. The solution of the proposed method is sought in the range of the projection, where the projection acts as a parametrisation and the parameter domain is the finite element space. The fact that the projection is not injective entails the need of the stabilisation in the third step.

Interestingly, if its nodal values are  positive, the classical finite element solution coincides with the one of the proposed method. More precisely, and more importantly, the solution to the present method
turns out to be the orthogonal projection of the exact solution 
onto the closed and convex set of finite element functions with positive
nodal values. This observation is instrumental in this work and yields the following two important consequences:  
\begin{itemize}
	\item The proposed method can be viewed as a reformulation of a discrete obstacle problem. In contrast to the latter, the reformulation consists of a discrete variational equality and encodes the crucial inequality constraints in Lipschitz-continuous projections.
	This difference may prepare the ground for solvers that are an alternative to the usually used constrained optimisation techniques;  see, e.g., \cite{EHS09}. Here, we employ a 
simple approach based upon Richardson-like iterations.
	\item Near best approximation for suitable error notions follows as a corollary. In particular, for the linear reaction-diffusion problem, the solution of the proposed method is actually the best approximation to the exact solution in the energy norm from the aforementioned convex subset. For a more general, semilinear problem with a power-like nonlinear reaction, the proposed method is shown to be quasi-optimal with respect to the sum of the $H^1$-seminorm and a quasinorm associated with the reaction term.
\end{itemize}
To illustrate both aspects, a series of numerical experiments is presented, showcasing the superior performance of the proposed method compared to standard finite element approximations.

It is worth mentioning that related approaches (albeit with distinct differences to the present one) have been advocated in the literature.
In \cite{Bochev20} the bounds on the continuous solution are imposed as a restriction in a constraint optimisation problem, and a link to
nonlinear stabilisation is presented.  Also, in \cite{MN16} a constrained optimisation problem involving a mixed  weak formulation is employed to enforce  bound-preservation.  In \cite{BE17} the link between positivity-preservation and the contact problem
is used to motivate a nonlinear stabilised method  that enforces the positivity of the solution in a weak way.
In addition, the
\textit{cut-off} finite element method \cite{Lu2013TheCM}
truncates the finite element function \textit{after}  it  is computed at a given time step, so as
to input the truncated function as approximation of the current time step;
see also \cite{YYZ22} for an application of a related idea to the Allen-Cahn equation.
 In the steady state case, the idea of \textit{truncating}
the finite element solution to respect given bounds has been justified for reaction-diffusion equations in \cite{Kreuzer} using energy arguments. 
In \cite{LS08} a conservative recovery strategy is proposed and tested numerically.
Finally, in the context of the Joule heating problem a truncation of one of the variables is introduced in order to
regularise a rough right-hand side in \cite{JM13}.   

The remainder of the manuscript is organised as follows. The rest of introduction is devoted to setting up notations and main
assumptions of this work. The finite element method is presented in Section~\ref{Sec:Fem}, and its stability and optimal error
estimates are proven. Then, in Section~\ref{Sec:nonlinear-reac} we extend this framework to a nonlinear reaction-diffusion equation, and
finally Section~\ref{Sec:Numer} is devoted to presenting numerical experiments.

\section{General setting and the linear model problem}

We will use standard notation for Sobolev spaces, in line with, e.g., \cite{EG21-I}. More precisely,
for $D\subseteq\bR^d$,  {$d=1,2,3$}, we denote by $\|\cdot\|_{0,p,D}^{}$ the $L^p(D)$-norm, when $p=2$
the subscript $p$ will be often omitted, and we only write $\|\cdot\|_{0,D}^{}$. In addition, for $s\ge 0, p\in (1,\infty)$,
 we denote by $\|\cdot\|_{s,p,D}^{}$ ($|\cdot|_{s,p,D}^{}$) the norm (seminorm) in $W^{s,p}(D)$; when
$p=2$, we will again often omit the subscript $p$. 
In addition,
we denote by
$H^{-1}(D)$ the dual of $H^1_0(D)$
{
	while identifying $L^2(D)$ with its dual. Thus, writing $\langle\cdot,\cdot\rangle_D^{}$ for the duality pairing, we have
	\begin{equation*}
		\langle f,v\rangle_\Omega^{}=\int_\Omega f(\boldsymbol{x})v(\boldsymbol{x})\textrm{d}\boldsymbol{x}\qquad
		\forall\, v\in H^1_0(\Omega)\,,
	\end{equation*}
	whenever $f\in H^{-1}(D)$ is regular enough.
}The  $L^2(D)$-inner product is denoted by $(\cdot,\cdot)_D^{}$. 
We will not distinguish between inner product and duality pairing for scalar and for vector-valued functions.

The boundary-value problems we will be concerned with are posed in an open, bounded domain $\Omega$ with polyhedral Lipschitz boundary $\partial\Omega$.
Let $\calP$ be a conforming, shape-regular partition of $\Omega$ into simplices (or quadrilaterals/hexahedra) $K$.
Over $\calP$, and for $k\ge 1$, we define the finite element space
\begin{equation}
V_{\calP}^{}:=\{ v_h^{}\in C^0(\Omega):v_h^{}|_K^{}\in\mathcal{R}(K)\;\forall\,K\in\calP\}\cap H^1_0(\Omega)\,,
\end{equation}
where 
\begin{equation}
\mathcal{R}(K) = \left\{ \begin{array}{cl} \mathbb{P}_k^{}(K) & \;\textrm{if}\; K\;\textrm{is a simplex}\,, \\
\mathbb{Q}_k^{}(K) & \;\textrm{if}\; K\;\textrm{is {an affine} quadrilateral/hexahedron}\, , \end{array}\right.
\end{equation}
with $\mathbb{P}_k^{}(K)$ denoting the polynomials of total degree $k$ on $K$ and $\mathbb{Q}_k^{}(K) $  denoting the, 
mapped from a reference element through an affine mapping, polynomials of degree $\leq k$ in each variable.  
We denote by $\bx_i^{},i=1,\ldots,N$ the set of internal nodes of $\calP$, and by $\phi_1^{},\ldots,\phi_N^{}$ the set 
of usual Lagrangian basis functions spanning the space $V_\calP^{}$.

{The  diameter of $K$ is denoted by $h_K^{}$, $h=\max\{ h_K^{}:K\in\calP\}$,  and we define the mesh function $\frakh$ as a continuous, element-wise
linear function defined as a local average.  More precisely, for a node  $\bx_i^{}$ of $\calP$ we define the local neighbourhood $K_{\bx_i}^{}=\{ K\in \calP:\bx_i^{}\in K\}$, and define
$\frakh$ as the only element of $V_\calP^{}$ (with $k=1$) given by the nodal values
\begin{equation}\label{mesh-function-definition}
\frakh(\bx_i^{})=\frac{\sum_{K\in K_{\bx_i}^{}}h_K^{}}{\#K_{\bx_i}^{}}\,.
\end{equation} }

We recall the inverse inequality (see, e.g., \cite[Lemma~12.1]{EG21-I}): for all $m,\ell\in\mathbb{N}, m\le \ell$ and all $p,q\in[1,+\infty]$, there
exists a constant $C$, independent of $K \in \calP$ 
such that
\begin{equation}\label{inverse}
|q|_{\ell,p,K}^{}\le Ch_K^{m-\ell+d\left(\frac{1}{p}-\frac{1}{q}\right)}\,|q|_{m,q,K}^{}\,,
\end{equation} 
for every polynomial function $q$ defined on $K$. 

In the space $V_\calP^{}$ we denote by $(\cdot,\cdot)_h^{}$  the lumped $L^2(\Omega)$-inner product given by
\begin{equation}\label{lumped-product}
(v_h^{},w_h^{})_h^{}:= \sum_{i=1}^N \frakh^d(\bx_i) v_h^{}(\bx_i^{})w_h^{}(\bx_i^{})\,,
\end{equation}
with associated norm $|v_h^{}|_h^{}=(v_h^{},v_h^{})_h^{\frac{1}{2}}$.

\begin{remark} 
The following result, whose proof can be found in \cite[Proposition~12.5]{EG21-I},  
will be of importance for us in the analysis of the method
proposed in Section~\ref{Sec:Fem}:  there exist $C,c>0$, independent of $\frakh$, such that
\begin{equation}\label{equivalence}
c\,|v_h^{}|_h^2\le \|v_h^{}\|^2_{0,\Omega}\le C\,|v_h^{}|_h^2\,, 
\end{equation}
for all $v_h^{}\in V_\calP^{}$. 
\end{remark}

\subsection{Linear model problem}
Given $f\in H^{-1}(\Omega)$ we consider the following reaction-diffusion
equation: find $u:\Omega\to\bR$ such that
\begin{align}\label{reaction-diffusion}
-\dive\big(\calD\nabla u) +\mu u  &=f\qquad\textrm{in}\;\Omega\,,\\
u &=0 \qquad\textrm{on}\;\partial\Omega\, ,\nonumber
\end{align}
with $\mathcal{D}=\big(d_{ij}^{}\big)_{i,j=1}^d\in L^\infty(\Omega)^{d\times d}$ and $\mu\in L^\infty(\Omega)$ stand for the diffusion tensor and reaction coefficient, respectively. We assume that $\mu(\boldsymbol{x})\ge \mu_0^{}\ge 0$ a.e.\ in $\Omega$, and that the diffusion tensor $\calD$ is symmetric and uniformly strictly positive-definite in $\Omega$, viz.,
there exists a positive constant $\cald_0^{}>0$ such that, for almost all $\bx\in\Omega$, we have
\begin{equation}\label{elliptic-D}
\sum_{i,j=1}^d y_i^{} \cald_{ij}^{}(\bx)y_j^{}\ge d_0^{}\sum_{i=1}^d y_i^2\qquad\forall\, (y_1^{},\ldots,y_d^{})\in\bR^d\,.
\end{equation}

The weak formulation of \eqref{reaction-diffusion} reads: find $u\in H^1_0(\Omega)$, such that
\begin{equation}\label{weak-form}
a(u,v)=\langle f,v\rangle_\Omega^{}\qquad\forall\, v\in H^1_0(\Omega)\,,
\end{equation}
where $a(\cdot,\cdot):H^1_0(\Omega)\times H^1_0(\Omega)\to \mathbb{R}$ is the bilinear form defined by
\begin{equation}
a(v,w) :=
(\calD\nabla v, \nabla w)_\Omega^{}+ (\mu v , w)_\Omega^{}\,. \label{def-a}
\end{equation}

Defining the {\sl energy norm}
\begin{equation}\label{energy-norm}
\|v\|_a^{}:= \sqrt{a(v,v)}\,,
\end{equation}
the well-posedness of \eqref{weak-form} follows from the Lax-Milgram Lemma (see, e.g., \cite{EG21-I}). 

\begin{remark} At the core of this work is the following property of the solution of \eqref{weak-form}: 
as a consequence of maximum and comparison principles (see \cite{GT01}, or \cite[Corollary~4.4]{RR04})
the following bounds can be proven: for almost all $\bx\in \Omega$
the solution $u$ of \eqref{weak-form} satisfies 
\begin{equation}
-\frac{\|f\|_{0,\infty,\Omega}^{}}{\mu_0^{}}\le u(\bx) \le \frac{\|f\|_{0,\infty,\Omega}^{}}{\mu_0^{}}\,.
\end{equation}
 This last statement can be made more precise if $f\ge 0$ in $\Omega$. In fact, in this case for almost all $\bx\in \Omega$
the following inequality holds:
\begin{equation}
0\le u(\bx) \le \frac{\|f\|_{0,\infty,\Omega}^{}}{\mu_0^{}}\,.
\end{equation}
\end{remark}

The results given in the above remark motivate the introduction of the following assumption.

\noindent\underline{Assumption~(A1):}
We will suppose that the solution of \eqref{reaction-diffusion} satisfies
\begin{equation}\label{a-priori-bounds-u}
0\le u(x) \le \mathfrak{a}\qquad\textrm{for almost all}\; x\in\,\Omega\,,
\end{equation}
where $\mathfrak{a}$ is a known constant. 

\begin{remark}
The lower bound in $\hypothesis$ is not required to be equal to zero and 
all the results proven below hold for a more general bounding
box without major modifications. In particular,  the value $\mathfrak{a}$ can be replaced by a 
non-negative{, continuous} function $\mathfrak{a}(\bx)$.
\end{remark}

The standard Galerkin finite element method for \eqref{reaction-diffusion} is given by: find $u_h^{\rm FEM}\in V_\calP^{}$ such that
\begin{equation}\label{Gal:FEM}
a(u_h^{\rm FEM},v_h^{})=\langle f,v_h^{}\rangle_\Omega^{}\qquad\forall\, v_h^{}\in V_\calP^{}\,.
\end{equation}

It is well-known that {$u_h^{\rm FEM}$ is a best approximation in the following sense:
\begin{equation}\label{Galerkin-is-best-approx}
		\| u - u_h^{\rm FEM} \|_a 
		=
		\inf_{v_h \in V_\calP^{}} \| u - v_h \|_a. 
\end{equation}
However, this nice property does not prevent that}, when the reaction $\mu$ dominates the diffusion $\calD$, that is, e.g. if $\|\calD\|_{0,\infty,\Omega}^{} \ll \mu$, then $u_h^{\rm FEM}$ may exhibit spurious oscillations (see, e.g., \cite{RST08}) and,
thus, $u_h^{\rm FEM}$ may fail to satisfy $\hypothesis$. In fact, very stringent conditions need to be imposed on the mesh for the solution of a reaction-diffusion equation
such as \eqref{weak-form} to satisfy $\hypothesis$ (see, e.g., \cite{BKK08} for details), even in the case of scalar (isotropic) diffusion. For the case of a general diffusion $\calD$,
conditions on the mesh related to the weighted inner product $(\calD\cdot,\cdot)$ in $\bR^d$ need to be imposed, in addition to a mesh size restriction
(see, e.g., \cite{Hua11} for details).

\section{The finite element method for the linear problem}\label{Sec:Fem}

{The goal of this section is to derive a method that, on the one hand, essentially preserves the bounds in Assumption (A1) without restrictions on the mesh and, on the other hand, maintains the good approximation properties \eqref{Galerkin-is-best-approx} of the classical finite element solution.}

{To this end,}
we define the following closed convex subset of $V_\calP^{}$:
\begin{equation}\label{convex}
V_{\calP}^{+}:=\{ v_h^{}\in V_{\calP}^{}:v_h^{}(\bx_i^{})\in [0,\mathfrak{a}]\; \textrm{for all}\,\bx_i^{}=1,\ldots,N\}\,,
\end{equation}
that is, the set of finite element functions that respect the bound \eqref{a-priori-bounds-u}  at their degrees of freedom. 
With this convex set in mind, every finite element function is split as $v_h^{}=v_h^++v_h^-$, where $v_h^+\in V_\calP^+$ is defined by
\begin{equation} \label{positive-part}
v_h^+:= \sum_{i=1}^N \max\big\{ 0,\min\{v_h^{}(\bx_i^{}),\mathfrak{a}\}\big\}\phi_i^{}\,,
\end{equation}
and
\begin{equation}
v_h^-:=  v_h^{}-v_h^+\,, \label{negative-part}
\end{equation}
is the part of the function residing outside $V_\calP^+$.
{From now on, we refer to the functions $v_h^+$ and $v_h^-$ as the \textit{constrained}
and \textit{complementary} parts of $v_h^{}$.}

The finite element method proposed in this work reads: find $u_h^{}\in V_\calP^{}$, such that
\begin{equation}\label{FEM}
a_h^{}(u_h^{};v_h^{}) =  \langle f, v_h^{}\rangle_\Omega^{}\qquad\forall\, v_h^{}\in V_\calP^{}\,,
\end{equation}
where $a_h^{}(\cdot;\cdot)$ is the nonlinear form given by
\begin{equation}
a_h^{}(u_h^{};v_h^{}) := a(u_h^{+},v_h^{}) +s(u_h^{-},v_h^{}) \,,
\end{equation}
with $a(\cdot,\cdot)$ defined in \eqref{def-a}, and $s(\cdot,\cdot):C(\bar{\Omega})\times C(\bar{\Omega})\to \mathbb{R}$ is the stabilising bilinear form defined by
\begin{equation}\label{def-s}
	s(v_h^{},w_h^{}) =\alpha\sum_{i=1}^N \big(\|\calD\|_{0,\infty,\omega_i}^{}\frakh(\bx_i^{})^{d-2}+
	\|\mu\|_{0,\infty,\omega_i}^{}\frakh(\bx_i^{})^{d}\big)\,v_h^{}(\bx_i^{})w_h^{}(\bx_i^{})\,,
\end{equation}
where 
$\alpha>0$ is a non-dimensional constant to be determined precisely {in Theorem~\ref{Theo:Well-posedness}}, and 
$\omega_i:=\cup_{K\in \calP: K\cap K_{\bx_i}^{}\neq \emptyset}K$ { denotes an extended patch. The definition of $\omega_i$ will be exploited in establishing \eqref{s.ge.a} below.}

Defining the  stabilisation norm as
\begin{equation}\label{norms}
\|v_h^{}\|_s^{}:=\sqrt{ s(v_h^{},v_h^{})}\,,
\end{equation}
and using \eqref{equivalence}, { there}
exists a $C_{\rm equiv}^{}>0$, depending only on the shape-regularity constant, such that
\begin{equation}\label{s.ge.a}
 \|v_h^{}\|_a^2\le \frac{C_{\rm equiv}^{}}{\alpha}\, \|v_h^{}\|_s^2\qquad\forall\, v_h^{}\in V_\calP^{}\,,
\end{equation}
where $\alpha>0$ is the stabilisation parameter appearing in the definition \eqref{def-s} of $s(\cdot,\cdot)$.

~ \\

\subsection{Well-posedness and consistency}
~ \\

In this section we will analyse the existence, uniqueness, and stability results for the proposed method~\eqref{FEM}. We start with
the following monotonicity result for the stabilising form $s(\cdot,\cdot)$.

\begin{lemma}\label{Lem:s-monotone}
The bilinear form $s(\cdot,\cdot)$ satisfies the following inequalities:
\begin{align}
s(v_h^{-}-w_h^{-}, v_h^{+}- w_h^{+}) &\ge 0\qquad \forall\, v_h^{},w_h^{}\in V_\calP^{}\,, \label{s-monotone-2}\\
s(v_h^-,w_h^{}-v_h^+) &\le 0\qquad\forall\, v_h^{}\in V_\calP^{}, w_h^{}\in V_\calP^+\,. \label{s-monotone-1}
\end{align}
\end{lemma}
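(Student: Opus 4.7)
The plan is to reduce both inequalities to scalar statements at each interior node. The form $s$ has the nodal structure
\[
s(v_h,w_h) = \sum_{i=1}^N c_i\, v_h(\bx_i)\, w_h(\bx_i), \quad c_i := \alpha\bigl(\|\calD\|_{0,\infty,\omega_i}^{}\frakh(\bx_i)^{d-2} + \|\mu\|_{0,\infty,\omega_i}^{}\frakh(\bx_i)^{d}\bigr) \ge 0.
\]
Moreover, by \eqref{positive-part}--\eqref{negative-part}, the nodal value $v_h^+(\bx_i)$ depends only on $v_h(\bx_i)$ through the scalar clamp $P(t):=\max\{0,\min\{t,\mathfrak{a}\}\}$, and $v_h^-(\bx_i) = v_h(\bx_i) - P(v_h(\bx_i))$. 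Since the weights $c_i$ are non-negative, it suffices to establish each inequality for a single summand, i.e., for arbitrary scalars in place of the nodal values.

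For \eqref{s-monotone-2}, set $v:=v_h(\bx_i)$, $w:=w_h(\bx_i)$ and $b := P(v)-P(w)$. The $i$-th summand rewrites as $b\bigl((v-w)-b\bigr)$. Since $P$ is monotone non-decreasing, $b$ and $v-w$ share the same sign; since $P$ is $1$-Lipschitz, $|b|\le|v-w|$, and hence $(v-w)-b$ also shares that same sign. The product is therefore non-negative, and summation over $i$ with $c_i\ge 0$ yields \eqref{s-monotone-2}.

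For \eqref{s-monotone-1}, fix $w_h\in V_\calP^+$, so $w:=w_h(\bx_i)\in[0,\mathfrak{a}]$, and the target per-node inequality is $\bigl(v-P(v)\bigr)\bigl(w-P(v)\bigr)\le 0$ for every $v\in\bR$. A short three-case analysis on the position of $v$ relative to $[0,\mathfrak{a}]$ settles this: if $v\in[0,\mathfrak{a}]$ then $v-P(v)=0$; if $v>\mathfrak{a}$ then $v-P(v)>0$ while $w-P(v)=w-\mathfrak{a}\le 0$; if $v<0$ then $v-P(v)<0$ while $w-P(v)=w\ge 0$. In every case the product is non-positive, and the weighted sum gives \eqref{s-monotone-1}.

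No serious obstacle is anticipated. After the nodal reduction, \eqref{s-monotone-2} is the elementary monotonicity/contractivity inequality for the clamp $P$, while \eqref{s-monotone-1} is the obtuse-angle characterization of metric projection onto the interval $[0,\mathfrak{a}]$ with respect to the pairing induced by $s$. The only minor care is to notice that the definition \eqref{def-s} of $s$ as a weighted nodal sum with non-negative weights permits passing from pointwise inequalities at the nodes to the global forms \eqref{s-monotone-2}--\eqref{s-monotone-1}.
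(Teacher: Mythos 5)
Your proposal is correct and follows essentially the same route as the paper: reduce to each interior node using the non-negativity of the nodal weights, then settle \eqref{s-monotone-2} by comparing the signs of the clamped and complementary parts (your monotone plus $1$-Lipschitz argument for the scalar clamp is just a tidier packaging of the paper's explicit case list) and settle \eqref{s-monotone-1} by an elementary scalar inequality. The only cosmetic difference is that the paper obtains \eqref{s-monotone-1} as an immediate consequence of \eqref{s-monotone-2} by noting $w_h^-=0$ for $w_h\in V_\calP^+$, whereas you prove it directly via the obtuse-angle property of the projection onto $[0,\mathfrak{a}]$; both are equally valid and of comparable length.
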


\begin{proof} 
Let $\bx_i^{}\in\calP$ be any internal node.  If 
$v_h^{}(\bx_i^{})\ge w_h^{}(\bx_i^{})$, then $v_h^{+}(\bx_i^{})\ge w_h^{+}(\bx_i^{})$. Moreover, if $v_h^{}(\bx_i^{})\ge w_h^{}(\bx_i^{})>\mathfrak{a}$, then 
$v_h^{-}(\bx_i^{})=v_h^{}(\bx_i^{})-\mathfrak{a}\ge w_h^{}(\bx_i^{})-\mathfrak{a} = w_h^{-}(\bx_i^{})$. If $v_h^{}(\bx_i^{})>\mathfrak{a}$ and 
$w_h^{}(\bx_i^{})\le \mathfrak{a}$, then
$v_h^{-}(\bx_i^{})=v_h^{}(\bx_i^{})-\mathfrak{a}\ge 0 \ge w_h^{-}(\bx_i^{})$. If $v_h^{}(\bx_i^{})\le \mathfrak{a}$, then either $v_h^{-}(\bx_i^{})=0\ge w_h^{-}(\bx_i^{})$,
or $v_h^{-}(\bx_i^{})= v_h^{}(\bx_i^{}) \ge w_h^{}(\bx_i^{}) = w_h^{-}(\bx_i^{})$. 
Thus
	\begin{align}
&s(v_h^{-}-w_h^{-}, v_h^{+}- w_h^{+}) \nonumber\\
=&\	\alpha\sum_{i=1}^N \big(\|\calD\|_{0,\infty,\omega_i}^{}\frakh(\bx_i^{})^{d-2}+
	\|\mu\|_{0,\infty,\omega_i}^{}\frakh(\bx_i^{})^{d}\big)\,(v_h^{-}-w_h^{-})(\bx_i^{})( v_h^{+}- w_h^{+})(\bx_i^{}) \nonumber\\
	\ge&\  0,
\end{align}
which proves \eqref{s-monotone-2}. Taking $w_h^{}\in V_\calP^+$ gives $w_h^-=0$, and then \eqref{s-monotone-1} follows from \eqref{s-monotone-2}.
\end{proof}

\begin{theorem}[Well-posedness]\label{Theo:Well-posedness}
Let $T:V_\calP^{}\to [V_\calP^{}]'$ be the mapping defined by
\begin{equation}\label{T-definition}
[Tv_h^{},w_h^{}]=a(v_h^+,w_h^{})+s(v_h^-,w_h^{})\,.
\end{equation}
Then, $T$ is  continuous and, if the non-dimensional parameter $\alpha$ is chosen such that $\alpha\ge C_{\rm equiv}^{}$, it is also {strongly monotone,} since then  $T$ satisfies: {there exists $\beta>0$, independent
of $h$, such that
\begin{equation}\label{T:monotone}
[Tv_h^{}-Tw_h^{},v_h^{}-w_h^{}]\ge
\beta\, \|v_h^{}-w_h^{}\|^2_a\,,
\end{equation}}
for all $v_h^{},w_h^{}\in V_\calP^{}$.
As a consequence, \eqref{FEM} has a unique solution $u_h^{}\in V_\calP^{}$.
\end{theorem}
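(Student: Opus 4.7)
The plan is to recognise \eqref{FEM} as the operator equation $Tu_h=\tilde f$ on the finite-dimensional space $V_\calP^{}$, where $\tilde f\in[V_\calP^{}]'$ is the restriction of $v_h\mapsto\langle f,v_h\rangle_\Omega^{}$, and then appeal to the Browder--Minty framework: a continuous, strongly monotone operator on a finite-dimensional space is a homeomorphism onto $[V_\calP^{}]'$. The work thus splits into verifying continuity of $T$, proving \eqref{T:monotone} for $\alpha\ge C_{\rm equiv}^{}$, and concluding solvability.

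Continuity of $T$ should come essentially for free: the scalar map $t\mapsto\max\{0,\min\{t,\mathfrak{a}\}\}$ is $1$-Lipschitz on $\mathbb{R}$, so the nodal truncations $v_h\mapsto v_h^\pm$ are Lipschitz on $V_\calP^{}$ (recall that all norms are equivalent in finite dimensions), and composition with the continuous bilinear forms $a(\cdot,\cdot)$ and $s(\cdot,\cdot)$ delivers continuity of $T$ into $[V_\calP^{}]'$.

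The core of the argument is the monotonicity estimate. Using the decomposition $v_h-w_h=(v_h^+-w_h^+)+(v_h^--w_h^-)$ and expanding, one gets the four terms
\begin{equation*}
[Tv_h-Tw_h,v_h-w_h]=\|v_h^+-w_h^+\|_a^2+\|v_h^--w_h^-\|_s^2+a(v_h^+-w_h^+,v_h^--w_h^-)+s(v_h^--w_h^-,v_h^+-w_h^+).
\end{equation*}
The $s$-cross term is non-negative by \eqref{s-monotone-2}. For the $a$-cross term, I would apply Cauchy--Schwarz, then \eqref{s.ge.a} to trade the $a$-norm of $v_h^--w_h^-$ for its $s$-norm, and finally Young's inequality; this yields a lower bound of the form $-\tfrac{1}{2}\|v_h^+-w_h^+\|_a^2-\tfrac{C_{\rm equiv}^{}}{2\alpha}\|v_h^--w_h^-\|_s^2$. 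Under $\alpha\ge C_{\rm equiv}^{}$, a positive remainder $\tfrac{1}{2}\bigl(\|v_h^+-w_h^+\|_a^2+\|v_h^--w_h^-\|_s^2\bigr)$ survives. Translating this into a lower bound on $\|v_h-w_h\|_a^2$ via the triangle inequality and a second use of \eqref{s.ge.a} delivers \eqref{T:monotone} with, for instance, $\beta=\tfrac{1}{4}$.

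With continuity and strong monotonicity of $T$ secured, uniqueness is immediate from \eqref{T:monotone}, while existence follows from a standard finite-dimensional surjectivity argument for continuous coercive operators (coercivity being an immediate consequence of strong monotonicity after a Cauchy--Schwarz bound on $[T0,v_h]$). I expect the delicate step to be precisely the control of the sign-indefinite cross term $a(v_h^+-w_h^+,v_h^--w_h^-)$: balancing it against the two ``diagonal'' quadratic contributions is what pins down the threshold $\alpha\ge C_{\rm equiv}^{}$, and a careful choice of the weights in the Young inequality is needed to extract a clean, mesh-independent $\beta$.
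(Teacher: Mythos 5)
Your proposal is correct and follows essentially the same route as the paper: the same four-term expansion of $[Tv_h^{}-Tw_h^{},v_h^{}-w_h^{}]$, discarding the $s$-cross term via \eqref{s-monotone-2}, controlling the $a$-cross term with Cauchy--Schwarz, \eqref{s.ge.a} and Young under $\alpha\ge C_{\rm equiv}^{}$, and concluding existence and uniqueness from monotone operator theory, arriving at the same mesh-independent constant $\beta=\tfrac14$. Your continuity argument via the $1$-Lipschitz nodal truncation is slightly more explicit than the paper's, but this is a presentational difference only.
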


\begin{proof}  
 We start defining the mesh-dependent norm $\|\cdot\|_\calP^{}$ by
\begin{equation}\label{mesh-norm}
\|v_h^{}\|_\calP^{}:= \left\{ \|v_h^{}\|_a^{2}+\|v_h^{}\|_s^{2}\right\}^\frac{1}{2}\,.
\end{equation}
Then, for all $v_h^{},w_h^{},z_h^{}\in V_\calP^{}$ using the Cauchy-Schwarz inequality and \eqref{s.ge.a}  we get to
\begin{align}
[Tv_h^{}-Tw_h^{},z_h^{}] &= a\big(v_h^+-w_h^+,z_h^{}\big)+ s\big(v_h^- -w_h^{-},z_h^{}\big) \nonumber\\
&\le \big (\|v_h^+-w_h^+\|_a^{2}+\|v_h^- -w_h^{-}\|_{s}^{2}\big)^{\frac{1}{2}}\,\|z_h^{}\|_\calP^{}\,,
\end{align}
which proves the continuity of $T$. 
To prove the monotonicity of $T$, let $v_h^{},w_h^{}\in V_\calP^{}$. Using the Cauchy-Schwarz and Young inequalities, and 
\eqref{s.ge.a}, we obtain
{
\begin{align}
&[Tv_h^{}-Tw_h^{}, v_h^{}- w_h^{}] = a(v_h^{+}-w_h^{+}, v_h^{}- w_h^{}) + s(v_h^{-}-w_h^{-}, v_h^{}- w_h^{}) \nonumber\\
&= \|v_h^{+}-w_h^{+}\|_a^2  + a(v_h^{+}-w_h^{+}, v_h^{-}- w_h^{-})  + \|v_h^{-}-w_h^{-}\|_s^2  + s(v_h^{-}-w_h^{-}, v_h^{+}- w_h^{+}) \nonumber\\
&\ge  \frac{1}{2}\|v_h^{+}-w_h^{+}\|^2_a  + \left( 1-\frac{C_{\rm equiv}^{}}{2\alpha}\right) \|v_h^{-}-w_h^{-}\|_s^2 + s(v_h^{-}-w_h^{-}, v_h^{+}- w_h^{+})\nonumber\\
&\ge  \frac{1}{2}\|v_h^{+}-w_h^{+}\|^2_a  +\left( 1-\frac{C_{\rm equiv}^{}}{2\alpha}\right) \|v_h^{-}-w_h^{-}\|_s^2\,,  \label{extra-0}
\end{align}
where in the last inequality we used \eqref{s-monotone-2} in Lemma~\ref{Lem:s-monotone}.
Next,  using that $\alpha\ge C_{\rm equiv}^{}$ and \eqref{s.ge.a} we arrive at
\begin{equation}\label{extra-1}
\left( 1-\frac{C_{\rm equiv}^{}}{2\alpha}\right) \|v_h^{-}-w_h^{-}\|_s^2\ge
\frac{\alpha}{2C_{\rm equiv}^{}} \|v_h^{-}-w_h^{-}\|_a^2\,.
\end{equation}
Hence,  \eqref{T:monotone} follows replacing \eqref{extra-1} in \eqref{extra-0} and the fact that, since
$a(\cdot,\cdot)$ is symmetric and elliptic, we have $\|v_h^{}\|_a^{}\le \sqrt{2}\,(\|v_h^{+}\|_a^{}+\|v_h^{-}\|_a^{})$.
}

{Finally, the existence and uniqueness of solutions follows by using classical results in monotone operator theory (see, e.g., \cite[Theorem~10.49]{RR04}).}
%
\end{proof}

For the next observation, it is useful to recall that, for Galerkin methods, consistency can be expressed as an invariance property of the operator mapping the exact solution to the Galerkin one.

\begin{lemma}[Consistency]
\label{L:consistency}
Under $\hypothesis$, the method \eqref{FEM} enjoys the following invariance property: if the exact solution $u$ belongs to $V_\calP{}$, then $u_h^+=u_h^{}=u$.
\end{lemma}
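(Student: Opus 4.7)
The plan is to exploit the elementary fact that, under Assumption (A1) and the hypothesis $u\in V_\calP$, the exact solution already lies in the constrained convex set $V_\calP^+$, so the decomposition \eqref{positive-part}--\eqref{negative-part} reduces to $u^+=u$ and $u^-=0$. The rest is then a direct substitution into \eqref{FEM} together with the uniqueness statement of Theorem~\ref{Theo:Well-posedness}.

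First I would argue that $u\in V_\calP^+$. Since $V_\calP\subset C^0(\overline{\Omega})$, the exact solution is continuous on $\overline{\Omega}$, so the a.e.\ bounds $0\le u(\bx)\le\mathfrak{a}$ in \eqref{a-priori-bounds-u} hold pointwise, and in particular at every internal node: $u(\bx_i)\in[0,\mathfrak{a}]$ for $i=1,\dots,N$. By the definition \eqref{convex} this means $u\in V_\calP^+$. Consequently, by \eqref{positive-part} and \eqref{negative-part}, $u^+=u$ and $u^-=0$.

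Next, I would simply plug $u$ into the discrete nonlinear form. For any $v_h\in V_\calP$,
\begin{equation*}
a_h(u;v_h) \;=\; a(u^+,v_h)+s(u^-,v_h) \;=\; a(u,v_h)+0 \;=\; \langle f, v_h\rangle_\Omega,
\end{equation*}
where the last equality uses the weak formulation \eqref{weak-form}, valid since $V_\calP\subset H^1_0(\Omega)$. Hence $u$ itself solves \eqref{FEM}. By the uniqueness part of Theorem~\ref{Theo:Well-posedness}, we conclude $u_h=u$, and therefore $u_h^+=u^+=u=u_h$, which is the claimed invariance.

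There is essentially no hard step here; the only subtlety worth flagging is the passage from an a.e.\ bound on $u$ to a pointwise bound at the nodes, which relies on the continuity of elements of $V_\calP$. Everything else follows from a one-line substitution and the monotonicity-based uniqueness already established.
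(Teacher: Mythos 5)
Your proof is correct and follows essentially the same route as the paper: note that Assumption (A1) together with $u\in V_\calP$ gives $u^+=u$ and $u^-=0$, substitute into \eqref{FEM} to see $u$ solves the discrete problem, and invoke the uniqueness from Theorem~\ref{Theo:Well-posedness}. Your extra remark on upgrading the a.e.\ bound to a nodal bound via continuity is a minor detail the paper leaves implicit, but it does not change the argument.
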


\begin{proof}
As $u \in V_\calP{}$, the functions	$u^+$ and $u^-$ are defined and, due to $\hypothesis$, we have $u^+=u$
and $u^-=0$. As a result, we get 
\begin{equation*}
 a_h^{}(u;v_h^{}) = a(u^{+},v_h^{})=a(u,v_h^{}) =\langle f,v_h\rangle_\Omega^{}\,,
\end{equation*}
for all $v_h\in V_\calP^{}$. Thanks to Theorem \ref{Theo:Well-posedness}, the solution of \eqref{FEM} is unique and therefore we obtain $u_h=u$. Using that $u_h^-=u^-=0$, we conclude that  $u_h^+ = u_h^{}$.
\end{proof}

\begin{remark}
  { Whenever $k=1$ the method~\eqref{FEM} is actually
    bound-preserving throughout the domain $\Omega$, not only in the
    nodes. Furthermore, if $\mu>0$, in addition it also respects the
    discrete maximum principle. That is, if $f\ge 0$ then $u_h^+$
    cannot attain an interior negative minimum, and reaches its
    minimum at the boundary.  If $\mu=0$, it appears that the method
    by construction does not guarantee that $u_h^+$ cannot attain an
    interior minimum. Nonetheless, all our numerical experiments to
    date have failed to produce such a case.  }
\end{remark}


\subsection{Characterisation of the {constrained} part and error estimates}\label{Sec:error}
~ \\

One of the salient features of the method~\eqref{FEM} is that $u_h^+$
is characterised as being the unique solution of variational
inequality posed on the closed convex set $V_\calP^+$. This is proven
in the next result.

\begin{theorem}[{Characterization of constrained part}]\label{Th:Variational-Inequality}
Let $u_h^{}\in V_\calP^{}$ be the unique solution of \eqref{FEM}. Then, $u_h^+$ satisfies the following variational inequality: $u_h^+\in V_\calP^+$ and satisfies
\begin{equation}\label{Eq:Variational-Inequality}
a(u_h^+,v_h^{}-u_h^+)\ge \langle f, v_h^{}-u_h^+\rangle_\Omega^{}\qquad\forall\, v_h^{}\in V_\calP^+\,.
\end{equation}
\end{theorem}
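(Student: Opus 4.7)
The plan is to exploit the already-established monotonicity of the stabilisation form, specifically the one-sided bound \eqref{s-monotone-1} in Lemma~\ref{Lem:s-monotone}, which was designed precisely to make the complementary term produce the correct sign against a test function lying in $V_\calP^+$.

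First, I would observe that $u_h^+\in V_\calP^+$ holds by construction of the operator $v_h\mapsto v_h^+$ defined in \eqref{positive-part}; indeed, the nodal values of $u_h^+$ all lie in $[0,\mathfrak{a}]$. This is the ``admissibility'' half of the statement, and it requires no computation.

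Next, given an arbitrary $v_h\in V_\calP^+$, I would set $w_h=v_h-u_h^+\in V_\calP^{}$ and use it as a test function in the discrete problem \eqref{FEM}. This yields
\begin{equation*}
a(u_h^+,v_h-u_h^+)+s(u_h^-,v_h-u_h^+)=\langle f,v_h-u_h^+\rangle_\Omega^{}.
\end{equation*}
At this point the key step is to apply \eqref{s-monotone-1} in Lemma~\ref{Lem:s-monotone} with the roles ``$v_h$''$=u_h^{}$ (so that its complementary part is $u_h^-$) and ``$w_h$''$=v_h\in V_\calP^+$. This gives $s(u_h^-,v_h-u_h^+)\le 0$, hence $-s(u_h^-,v_h-u_h^+)\ge 0$. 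Substituting back, one immediately obtains
\begin{equation*}
a(u_h^+,v_h-u_h^+)=\langle f,v_h-u_h^+\rangle_\Omega^{}-s(u_h^-,v_h-u_h^+)\ge \langle f,v_h-u_h^+\rangle_\Omega^{},
\end{equation*}
which is the desired inequality \eqref{Eq:Variational-Inequality}.

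There is essentially no ``hard part'' to this proof beyond lining up the monotonicity lemma correctly; the real work was already invested in proving Lemma~\ref{Lem:s-monotone} and the well-posedness of the nonlinear problem, both of which are invoked here. What is worth emphasising in the write-up is that the splitting $v_h-u_h^+$ is a perfectly legitimate test function in $V_\calP^{}$ (even though $u_h^+$ is not the discrete solution $u_h^{}$), and that \eqref{s-monotone-1} is the precise statement needed — it converts the stabilisation residual into a one-sided bound exactly for test functions lying in the admissible convex set $V_\calP^+$.
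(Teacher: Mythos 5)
Your proof is correct and follows essentially the same route as the paper: the paper tests \eqref{FEM} with $v_h^{}$, $v_h^-$ and $u_h^+$ and subtracts to arrive at the identity with test function $v_h^+-u_h^+$, which is just a slightly more roundabout way of what you do directly by testing with $v_h^{}-u_h^+$ for $v_h^{}\in V_\calP^+$, and both arguments then conclude via the one-sided monotonicity \eqref{s-monotone-1} of Lemma~\ref{Lem:s-monotone}. No gaps; your streamlined use of linearity in the test slot is a valid simplification.
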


\begin{proof} Using \eqref{FEM} with $v_h^{}\in V_\calP^{}$, $v_h^{-}$, and $u_h^+$ as test functions we have the following
equalities
\begin{align*}
a(u_h^+,v_h^{})+ s(u_h^-,v_h^{}) & = \langle f, v_h^{}\rangle_\Omega^{} \,,\\
a(u_h^+,v_h^{-})+ s(u_h^-,v_h^{-}) & = \langle f, v_h^{-}\rangle_\Omega^{} \,,\\
a(u_h^+,u_h^{+})+ s(u_h^-,u_h^{+}) & = \langle f, u_h^{+}\rangle_\Omega^{} \,.
\end{align*}
Subtracting the second and third equation from the first one, and using that $v_h^+=v_h^{}-v_h^-$ we get 
\begin{equation}
a(u_h^+,v_h^{+}-u_h^+) + s(u_h^-,v_h^{+}-u_h^+)  = \langle f, v_h^{+}-u_h^+\rangle_\Omega^{}\,,
\end{equation}
for all $v_h^{+}\in V_\calP^+$. Finally, using \eqref{s-monotone-1} in Lemma~\ref{Lem:s-monotone} we get that $u_h^+\in V_\calP^+$ 
satisfies \eqref{Eq:Variational-Inequality}, thus completing the proof. 
\end{proof}

\begin{remark}
Due to Stampacchia's Theorem, problem \eqref{Eq:Variational-Inequality} can be proven directly to have a unique solution.
This proves that both the stabilised method \eqref{FEM} and the variational inequality \eqref{Eq:Variational-Inequality} are, in fact, equivalent.
\end{remark}

The equivalence of the method as a  variational inequality enables us to prove best approximation error estimates in a standard fashion.

\begin{theorem}[{Abstract error analysis}]\label{Th:error}
Let $u$ be the solution of \eqref{reaction-diffusion} and $u_h\in V_\calP^{}$  be the unique solution of \eqref{FEM}. Then,
\begin{equation}\label{error-u_h^+}
\|u-u_h^+\|_a^{} = \min_{v_h^{}\in V_\calP^+}\|u-v_h^{}\|_a^{}\,.
\end{equation}
Moreover, let $u_h^{\rm FEM}$ be the solution of the (standard) finite element method \eqref{Gal:FEM}. Then,
the negative part $u_h^-$ satisfies the following error estimate
\begin{equation}\label{error-u_h^-}
s(u_h^-,u_h^-)^{\frac{1}{2}}\le \sqrt{\frac{C_{\rm equiv}^{}}{\alpha}}\min\big\{\|u-u_h^+\|_a^{},\|u_h^{\rm FEM}-u_h^+\|_a^{}\big\}\,.
\end{equation}
\end{theorem}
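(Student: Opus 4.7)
The plan is to treat the two estimates separately, exploiting that $u_h^+$ is characterized through the variational inequality of Theorem~\ref{Th:Variational-Inequality} (for \eqref{error-u_h^+}) and that $u_h^-$ admits a residual-type identity coming directly from \eqref{FEM} (for \eqref{error-u_h^-}). Galerkin orthogonality with respect to the standard FEM solution $u_h^{\rm FEM}$ will supply the second bound in \eqref{error-u_h^-}, after which a norm-equivalence step via \eqref{s.ge.a} closes the argument.

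For \eqref{error-u_h^+} I would argue as follows. Since $u$ solves the continuous weak formulation \eqref{weak-form}, for every $v_h\in V_\calP^+$ we have $\langle f,v_h-u_h^+\rangle_\Omega = a(u,v_h-u_h^+)$. Subtracting this identity from \eqref{Eq:Variational-Inequality} yields
\begin{equation*}
a(u-u_h^+,v_h-u_h^+)\le 0\qquad\forall\,v_h\in V_\calP^+.
\end{equation*}
Then for an arbitrary $v_h\in V_\calP^+$ the standard expansion
\begin{equation*}
\|u-v_h\|_a^2 = \|u-u_h^+\|_a^2 + 2a(u-u_h^+,u_h^+-v_h) + \|u_h^+-v_h\|_a^2
\end{equation*}
together with the above inequality shows $\|u-v_h\|_a^2\ge \|u-u_h^+\|_a^2$. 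Since $u_h^+\in V_\calP^+$ by construction, the infimum is attained and \eqref{error-u_h^+} follows.

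For \eqref{error-u_h^-} I would test \eqref{FEM} with $v_h=u_h^-\in V_\calP^{}$, obtaining
\begin{equation*}
\|u_h^-\|_s^2 = s(u_h^-,u_h^-) = \langle f,u_h^-\rangle_\Omega - a(u_h^+,u_h^-).
\end{equation*}
Now I would use two different substitutions for $\langle f,u_h^-\rangle_\Omega$: first, $\langle f,u_h^-\rangle_\Omega = a(u,u_h^-)$ from \eqref{weak-form}, which gives $\|u_h^-\|_s^2 = a(u-u_h^+,u_h^-)$; second, $\langle f,u_h^-\rangle_\Omega = a(u_h^{\rm FEM},u_h^-)$ from \eqref{Gal:FEM}, which gives $\|u_h^-\|_s^2 = a(u_h^{\rm FEM}-u_h^+,u_h^-)$. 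In both identities I would apply the Cauchy--Schwarz inequality with respect to $\|\cdot\|_a$, then bound $\|u_h^-\|_a\le \sqrt{C_{\rm equiv}/\alpha}\,\|u_h^-\|_s$ via \eqref{s.ge.a}. Dividing through by $\|u_h^-\|_s$ (treating the trivial case $u_h^-=0$ separately) produces the two competing bounds, and taking the minimum delivers \eqref{error-u_h^-}.

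The argument is largely bookkeeping; the only subtlety I expect is the first step, namely deriving the Galerkin-type obliqueness $a(u-u_h^+,v_h-u_h^+)\le 0$. This is not a full orthogonality but a one-sided inequality coming from the obstacle-problem structure, and it is precisely what allows $u_h^+$ to be the energy projection onto the convex set $V_\calP^+$ rather than merely a quasi-best approximation. Once this is in place, the rest reduces to the standard Galerkin toolbox combined with the norm equivalence \eqref{s.ge.a}.
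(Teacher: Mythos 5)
Your proposal is correct and follows essentially the same route as the paper: for \eqref{error-u_h^+} you derive $a(u-u_h^+,v_h-u_h^+)\le 0$ from Theorem~\ref{Th:Variational-Inequality} and the weak form and then invoke the standard convex-projection/best-approximation expansion (which the paper states without spelling out), and for \eqref{error-u_h^-} you test \eqref{FEM} with $u_h^-$, substitute $\langle f,u_h^-\rangle_\Omega$ via \eqref{weak-form} and via \eqref{Gal:FEM}, and conclude with Cauchy--Schwarz and \eqref{s.ge.a}, exactly as in the paper's proof. No gaps; the only difference is that you make the elementary expansion argument explicit.
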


\begin{proof}
Since $u_h^+$ is the solution of \eqref{Eq:Variational-Inequality}, it satisfies in particular
\begin{equation}
a(u_h^{+}-u,v_h^{}-u_h^+)
\ge 
0\qquad\forall\, v_h^{}\in V_\calP^+\,.
\end{equation}
Thus, $u_h^+$ is the best approximation of $u$ in $V_\calP^{+}$ with respect to the norm induced by
$a(\cdot,\cdot)$, thus proving the quasi-optimality for $u_h^+$ stated in \eqref{error-u_h^+}.

The proof of \eqref{error-u_h^-} is similar. In fact,  using the Cauchy-Schwarz  inequality and \eqref{s.ge.a} we get
\begin{equation}
s(u_h^-,u_h^-)=a(u-u_h^+,u_h^-)\le
\sqrt{\frac{C_{\rm equiv}^{}}{\alpha}}\,\|u-u_h^+\|_a^{}\sqrt{s(u_h^-,u_h^-)}\,.
\end{equation}
Alternatively, using that $a(u-u_h^{\rm FEM},u_h^-)=0$, then, we also have
\begin{equation}
s(u_h^-,u_h^-)=a(u_h^{\rm FEM}-u_h^+,u_h^-)\le\sqrt{\frac{C_{\rm equiv}^{}}{\alpha}}\|u_h^{\rm FEM}-u_h^{+}\|_a^{}\sqrt{s(u_h^-,u_h^-)}\,,
\end{equation}
which proves \eqref{error-u_h^-}.
\end{proof}

\begin{remark}[{Convergence of complementary part}]
We may interpret the last result in two ways. First,  $u_h^-$ converges to zero at least at the same speed as $u_h^+$ converges to $u$.
Moreover, \eqref{error-u_h^-} implies that in certain cases this convergence is much faster than the one for $u_h^+$. 
More precisely, focusing on the case of piecewise linear finite element functions, if the mesh satisfies the conditions
for the plain Galerkin method to admit discrete maximum principle (see, e.g., \cite{BKK08}), then $u_h^{\rm FEM}\in V_\calP^+$, which implies
that $u_h^{}=u_h^{\rm FEM}$ is also the solution to \eqref{FEM}. Thus, thanks to \eqref{error-u_h^-}, for certain meshes,
and their regular refinements, we have that $u_h^-=0$.
\end{remark}

{
\begin{remark}[Best approximation of constrained part]
In view of the best approximation result \eqref{error-u_h^+} for the constrained part $u_h^+$, there is no better finite element function with bound-preserving nodal values in the energy norm. Thus, \eqref{error-u_h^+} is the counterpart of the classical best approximation result \eqref{Galerkin-is-best-approx} for the proposed method.
\end{remark}
}

We finish this section by discussing briefly the notion of numerical solution. Since our main interest is the part of the solution
$u_h^{}$ that belongs to $V_\calP^+$, namely $u_h^+$, then the latter will be considered to be the numerical solution in the remaining
of the manuscript. The ``intermediate'' 
solution $u_h^{}$ appears mostly as a tool to be able to replace the variational inequality
by an equality posed over the whole vector space $V_h^{}$.

\newcommand{\W}{\ensuremath{\Omega}}
\newcommand{\hoz}{H^{1}_0(\W)}
\newcommand{\norm}[1]{\ensuremath{\left|#1\right|}}
\newcommand{\Norm}[1]{\ensuremath{\left\|#1\right\|}}
\newcommand{\qp}[1]{\ensuremath{\!\left({#1}\right)}}
\newcommand{\bi}[2]{\ensuremath{a\qp{#1,#2}}}
\newcommand{\semi}[3]{\ensuremath{b\qp{#1;#2,#3}}}
\newcommand{\stab}[2]{\ensuremath{s\qp{#1,#2}}}
\newcommand{\qa}[1]{\ensuremath{\left\langle{#1}\right\rangle}}
\newcommand{\ltwop}[2]{\ensuremath{\qa{#1,#2}}}
\newcommand{\qnorm}[2]{\Norm{#1}_{\qp{#2}}}
\newcommand{\fes}{V_\calP^{}}

\section{A problem with nonlinear reaction}\label{Sec:nonlinear-reac}

To showcase the potential generality of the proposed bound-preserving approach, we now extend \eqref{FEM} to a
semilinear problem with monotone nonlinearity. Specifically,  for $p\in (1,\infty)$ we consider
the problem of  finding $u$ such that
\begin{equation}
  \label{eq:pde}
  \begin{split}
    -\dive\big(\calD\,\nabla u\big) + \norm{u}^{p-2} u &= f \qquad\text{ in } \W\,,
    \\
    u &= 0 \qquad\text{ on } \partial\W\,,
  \end{split}
\end{equation}
where $\calD$ satisfies the same assumptions as above.
To avoid technical diversions, we will only consider $p\ge 2$. 
This class of equation is sometimes referred to as the
Lane-Emden-Fowler equation and is related to problems with critical
exponents \cite{clement1996quasilinear}. Furthermore, they arise in
the theory of boundary layers of viscous fluids
\cite{wong1975generalized}, among other application areas.

 The weak form of this problem is given by: find $u\in \mathcal{X}:=\hoz\cap L^p(\Omega)$ such that
\begin{equation}
  \label{eq:weakform}
  \bi{u}{v} + \semi{u}{u}{v} = \ltwop{f}{v}_\Omega^{}\qquad \forall\, v\in\mathcal{X}\,,
\end{equation}
where $a(\cdot,\cdot)$ is given by \eqref{def-a} (with $\mu=0$ in this case), and the semilinear form $b$ is given by
\begin{equation}
  \label{eq:semilinear-form}
  \semi{w}{u}{v} := (\norm{w}^{p-2} u, v)_\Omega^{}\,.
\end{equation}
The space $\mathcal{X}$ is provided with the norm
\begin{equation}\label{norm-X}
\|v\|_{\mathcal{X}}^{}:= |v|_{1,\Omega}^{}+\|v\|_{0,p,\Omega}^{}\,,
\end{equation}
thus making it a reflexive Banach space. So,  using monotone operator theory (see, e.g., \cite[Chap~10]{RR04}), this problem can be proven to have a unique solution.

The error analysis of this type of problem has been carried out in several works, as early
as {\cite{Glowinski.Marrocco:74,Glowinski.Marrocco:75}} (in the context of the $p$-Laplacian). In there, the estimates are suboptimal for some values of the exponent $p$.
So, later approaches (see, e.g., \cite{BL93}) have made use of the concept of quasinorm in order to obtain
optimal error estimates. As this is the approach we will follow in this work,  we start recalling the definition of a quasinorm.

\begin{definition}[Quasinorm]
Let $V$ be a real vector space. A \emph{quasinorm}  $\qnorm{\cdot}{q} $ in $V$ is a mapping $\qnorm{\cdot}{q} :V\to\bR$
that satisfies
\begin{equation}
      \qnorm{v}{q} \geq 0 \text{ and } \qnorm{v}{q} = 0 \iff v = 0,
  \end{equation}
for all $v\in V$. 
  However, the usual triangle inequality is replaced by
  \begin{equation}
    \qnorm{v+w}{q} \leq C\qp{\qnorm{v}{q}
      +
      \qnorm{w}{q}
    },
  \end{equation}
  for all $v,w\in V$, where $C $ may depend on the definition of $\qnorm{\cdot}{q}$, and the elements $v$ and $w$ themselves.
\end{definition}

\begin{remark} 
Strictly speaking, we should also demand that the quasinorm is homogeneous, that is, $\qnorm{\alpha v}{q}= |\alpha| \qnorm{v}{q}$ for
all $v\in V$ and all $\alpha\in\mathbb{R}$. The mapping we will use to measure the error does not satisfy this last property, but
this will not affect the error estimates presented below.
\end{remark}

In our analysis below we will make use of the following quasinorm in $L^p(\Omega)$: for a given $w\in L^p(\Omega)$
we define
  \begin{equation}\label{def:quasinorm}
    \qnorm{v}{w,p}^2 := \int_\W \norm{v}^2 \qp{\norm{w} + \norm{v}}^{p-2} \textrm{d} x\,,
  \end{equation}
for all $v\in L^p(\Omega)$. {It has the following properties.}

\begin{lemma}
The mapping $\qnorm{\cdot}{w,p}^{}$ is a quasinorm in $L^p(\Omega)$. Moreover, 
  for all $v,w\in L^p(\Omega)$, the following equivalence holds
  \begin{equation}\label{eq:equivalence}
    \Norm{v}_{ 0,p,\W}^p
    \leq
    \qnorm{v}{w,p}^2
    \leq
    \||v|+|w|\|_{0,p,\Omega}^{p-2}\,
    \Norm{v}_{ 0,p,\W}^2\,.
  \end{equation}
\end{lemma}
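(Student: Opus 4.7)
The plan is to establish the two-sided equivalence first, as it contains the main technical content, and then to derive the three quasinorm axioms from it. The lower bound of the equivalence is elementary pointwise analysis; the upper bound is a direct H\"older estimate; and the quasi-triangle inequality then follows by combining them with the usual triangle inequality for $\|\cdot\|_{0,p,\Omega}$.

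For the lower bound, I would note that $|w|+|v| \ge |v| \ge 0$ and, since $p-2\ge 0$, raising to the $(p-2)$-th power is monotone on $[0,\infty)$, so $(|w|+|v|)^{p-2} \ge |v|^{p-2}$ pointwise. Multiplying by $|v|^2$ and integrating yields $\|v\|_{0,p,\Omega}^p \le \qnorm{v}{w,p}^2$. For the upper bound, I would apply H\"older's inequality to the integrand $|v|^2 \cdot (|w|+|v|)^{p-2}$ with the conjugate exponents $p/2$ and $p/(p-2)$ (noting that $2/p + (p-2)/p = 1$), obtaining
\begin{equation*}
\qnorm{v}{w,p}^2 \le \left(\int_\Omega |v|^p \,\textrm{d}x\right)^{2/p}\left(\int_\Omega (|w|+|v|)^p\,\textrm{d}x\right)^{(p-2)/p} = \|v\|_{0,p,\Omega}^2 \, \||v|+|w|\|_{0,p,\Omega}^{p-2}.
\end{equation*}
The boundary case $p=2$ is trivial, since then $(|w|+|v|)^{p-2}\equiv 1$ and both sides reduce to $\|v\|_{0,2,\Omega}^2$.

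Turning to the quasinorm axioms, non-negativity of $\qnorm{\cdot}{w,p}$ is immediate because $p\ge 2$ makes the integrand pointwise non-negative. For definiteness, if $\qnorm{v}{w,p}=0$ then the integrand vanishes a.e., and on the set $\{v \ne 0\}$ both factors $|v|^2$ and $(|w|+|v|)^{p-2}$ are strictly positive (the latter equals $1$ when $p=2$, and satisfies $(|w|+|v|)^{p-2}\ge|v|^{p-2}>0$ when $p>2$), which forces $v=0$ a.e.

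Finally, the quasi-triangle inequality follows from the two bounds and the triangle inequality in $L^p(\Omega)$. Applying the upper bound to $v_1+v_2$ and using $|v_1+v_2|\le |v_1|+|v_2|$ yields
\begin{equation*}
\qnorm{v_1+v_2}{w,p}^2 \le \||v_1|+|v_2|+|w|\|_{0,p,\Omega}^{p-2}\left(\|v_1\|_{0,p,\Omega}+\|v_2\|_{0,p,\Omega}\right)^{2},
\end{equation*}
which is finite whenever $v_1,v_2,w\in L^p(\Omega)$. Because the statement of the lemma explicitly allows the quasinorm constant $C$ to depend on the elements being added, the required inequality $\qnorm{v_1+v_2}{w,p} \le C\left(\qnorm{v_1}{w,p}+\qnorm{v_2}{w,p}\right)$ follows by choosing $C$ as the ratio of the two sides when the denominator is nonzero, and by noting that the case $v_1=v_2=0$ is trivial. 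The only step requiring real care is the clean H\"older application underlying the upper bound; everything else is essentially monotonicity of $t\mapsto t^{p-2}$ combined with the ordinary $L^p$ triangle inequality.
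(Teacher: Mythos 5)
Your proof is correct, and it is genuinely more self-contained than what the paper offers: the paper does not write out an argument at all, but simply defers both claims to the literature (Proposition~2.1 and Equation~(10) of Ebmeyer--Liu, and Proposition~3.1 of Liu), whereas you supply the details directly. Your two-sided bound is exactly the standard mechanism behind those citations: monotonicity of $t\mapsto t^{p-2}$ on $[0,\infty)$ for the lower bound, and H\"older with the conjugate pair $\bigl(p/2,\,p/(p-2)\bigr)$ for the upper bound, with the degenerate case $p=2$ correctly dispatched separately. For the quasinorm axioms, note that definiteness follows even more quickly from your own lower bound, $\Norm{v}_{0,p,\W}^p\le\qnorm{v}{w,p}^2$, though your pointwise argument is equally valid. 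The one place where your route diverges in substance is the quasi-triangle inequality: since the paper's definition explicitly allows the constant $C$ to depend on the elements being added, your argument reduces that axiom to finiteness of $\qnorm{v_1+v_2}{w,p}$ (via the upper bound) plus definiteness to handle the zero case, which is logically complete for the lemma as stated; the cited Ebmeyer--Liu proposition proves a more quantitative version in which the dependence of $C$ on the elements is controlled through their norms, which is what one would need if the quasi-triangle inequality were ever used quantitatively. Since the error analysis in Theorem~\ref{the:best-approx} only invokes the equivalence \eqref{eq:equivalence} and the monotonicity/continuity bounds \eqref{eq:coer}--\eqref{eq:bdd}, not the quasi-triangle constant, your leaner argument buys a short, fully elementary proof at no cost to the rest of the paper.
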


\begin{proof}
The proof that $\qnorm{\cdot}{w,p}^{}$ is a quasinorm is completely analogous to that of \cite[Proposition~2.1]{ebmeyer2005quasi}. The equivalence \eqref{eq:equivalence}
is proven following exactly the same steps as in the proof of \cite[Equation~(10)]{ebmeyer2005quasi} (see also \cite[Proposition~3.1]{Liu2000}).
\end{proof}

In addition, the following monotonicity and continuity results can be proven for the nonlinear form $b(\cdot;\cdot,\cdot)$.

\begin{lemma}
The nonlinear form $b(\cdot;\cdot,\cdot)$ is strongly monotone with respect to the
quasinorm \eqref{def:quasinorm}. More precisely, there exists a constant $C_C^{}>0$ such that
  \begin{equation}
    \label{eq:coer}
    \semi{u}{u}{u-v}
    -
    \semi{v}{v}{u-v}
    \geq
    C_C^{}
    \qnorm{u-v}{u,p}^2.
  \end{equation}
  Moreover, for any $\theta\in (0,1]$ the following holds
  \begin{equation}
    \label{eq:bdd}
    \norm{\semi{u}{u}{w}
      -
      \semi{v}{v}{w}}
    \leq
    C_B^{}
    \qp{\theta\qnorm{u-v}{u,p}^2
      +
      \theta^{{-p+1}}  \qnorm{w}{u,p}^2
    }\qquad\forall\, u,v,w\in L^p(\Omega)\,,
  \end{equation}
  where $C_B^{}$ depends only on $p$.
\end{lemma}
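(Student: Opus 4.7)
Both estimates reduce to pointwise algebraic inequalities for the scalar function $t\mapsto|t|^{p-2}t$ which, once integrated, yield the quasinorm bounds. A key observation exploited throughout is that $|u|+|u-v|$ and $|u|+|v|$ are pointwise equivalent (up to multiplicative constants depending only on $p$), since $|v|\leq |u|+|u-v|$ and $|u-v|\leq |u|+|v|$. This lets us freely pass between the weight $(|u|+|v|)^{p-2}$, which arises naturally from the pointwise inequalities below, and the weight $(|u|+|u-v|)^{p-2}$ appearing in the quasinorm $\qnorm{u-v}{u,p}$.

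For the strong monotonicity, the plan is to first expand
\[
\semi{u}{u}{u-v} - \semi{v}{v}{u-v} = \int_\W \bigl(|u|^{p-2}u - |v|^{p-2}v\bigr)(u-v)\,\mathrm{d}x,
\]
and then invoke the classical pointwise inequality valid for $p\geq 2$,
\[
(|a|^{p-2}a - |b|^{p-2}b)(a-b) \geq c_p\,|a-b|^2(|a|+|b|)^{p-2},
\]
which follows from the fundamental theorem of calculus applied to $\phi(t)=|t|^{p-2}t$ after reducing to a one-dimensional argument (see e.g.\ \cite{BL93}). The equivalence of weights then yields the desired lower bound $C_C\,\qnorm{u-v}{u,p}^2$ after integration.

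For the continuity estimate, the starting point is the companion pointwise bound $\bigl||a|^{p-2}a - |b|^{p-2}b\bigr| \leq c_p\,|a-b|(|a|+|b|)^{p-2}$, which yields
\[
|\semi{u}{u}{w} - \semi{v}{v}{w}| \leq c_p \int_\W |u-v|\,(|u|+|v|)^{p-2}\,|w|\,\mathrm{d}x.
\]
To recover the asymmetric splitting with factor $\theta^{-p+1}$, I would carry out a pointwise case analysis at the threshold $|w|=\theta|u-v|$. On the set $\{|w|\leq \theta|u-v|\}$ the integrand is at most $\theta\,|u-v|^2(|u|+|v|)^{p-2}$, which by the equivalence above is absorbed into $\theta\,\qnorm{u-v}{u,p}^2$. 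On the complementary set $\{|w|>\theta|u-v|\}$, the bound $|v|\leq |u|+|u-v|< |u|+|w|/\theta$ gives $(|u|+|v|)^{p-2} \leq C_p\,\theta^{-(p-2)}(|u|+|w|)^{p-2}$; combined with $|u-v|<|w|/\theta$, this produces exactly the factor $\theta^{-(p-1)}=\theta^{-p+1}$ in front of $|w|^2(|u|+|w|)^{p-2}$, which integrates to $\theta^{-p+1}\,\qnorm{w}{u,p}^2$.

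The hard part will be arranging the case analysis so that the weights in the two resulting integrands match precisely those in $\qnorm{u-v}{u,p}^2$ and $\qnorm{w}{u,p}^2$, rather than some hybrid combination; once this is in place, integration is routine. The pointwise inequalities themselves are classical and well documented in the $p$-Laplacian literature (see e.g.\ \cite{BL93, ebmeyer2005quasi}), and tracking the $p$-dependent constants is tedious but straightforward.
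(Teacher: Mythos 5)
Your proof is correct and, for the monotonicity estimate \eqref{eq:coer}, it is exactly the paper's argument: the pointwise inequality $(|x|^{p-2}x-|y|^{p-2}y)(x-y)\ge C_2(|x|+|y|)^{p-2}|x-y|^2$ from \cite[Lemma~2.1]{BL93} together with the elementary comparison $|x|+|y|\ge(|x|+|x-y|)/2$, which is precisely your ``equivalence of weights''. For the continuity estimate \eqref{eq:bdd} the starting point is again the same (the companion pointwise bound from \cite{BL93}), but where the paper then simply invokes \cite[Lemma~2.2]{Liu2000} to obtain the asymmetric splitting with weights matching the two quasinorms, you re-derive that splitting from scratch via the threshold case analysis at $|w|=\theta|u-v|$. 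Your case analysis is sound: on $\{|w|\le\theta|u-v|\}$ the bound $|u|+|v|\le 2(|u|+|u-v|)$ gives the term $\theta\,2^{p-2}|u-v|^2(|u|+|u-v|)^{p-2}$, and on $\{|w|>\theta|u-v|\}$ the bounds $|u-v|<|w|/\theta$ and $|u|+|v|\le(2/\theta)(|u|+|w|)$ (here $\theta\le 1$ is used, consistently with the statement) give $2^{p-2}\theta^{-p+1}|w|^2(|u|+|w|)^{p-2}$, so integration yields \eqref{eq:bdd} with $C_B$ depending only on $p$. The only thing your route buys, at the price of a little extra work, is a self-contained proof of the splitting step with an explicit constant, in place of the citation to \cite{Liu2000}.
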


\begin{proof} In \cite[Lemma~2.1]{BL93} the following bounds are proven
\begin{align}
\big( |x|^{p-2}x-|y|^{p-2}y\big)\, (x-y) &\ge C_2^{}\big(|x|+|y|\big)^{p-2}|x-y|^2\,,\label{BL:bound-1}\\
\big| |x|^{p-2}x-|y|^{p-2}y\big| &\le C_1^{}\big(|x|+|y|\big)^{p-2}|x-y|\,, \label{BL:bound-2}
\end{align}
for all $x,y\in\mathbb{R}$. Next, \eqref{eq:coer}  follows by from \eqref{BL:bound-1} and noticing that 
$|x|+|y|\ge \big( |x|+|x-y|\big)/2$.
To prove \eqref{eq:bdd} we combine \eqref{BL:bound-2} with the result from \cite[Lemma~2.2]{Liu2000}.
\end{proof}

\subsection{The finite element method}
The finite element method we consider is the natural extension of
\eqref{FEM}, that is: find $u_h^{}\in \fes$ such that
\begin{equation}
  \label{eq:NLFEM}
  \bi{u_h^+}{v_h^{}}
  +
  \semi{u_h^+}{u_h^+}{v_h^{}}
  +
  \stab{u_h^-}{v_h^{}}
  =
  \ltwop{f}{v_h^{}}_\Omega^{}\qquad\forall\, v_h^{}\in V_\calP^{}\,.
\end{equation}
Here, the stabilisation term is given by
\begin{equation}
s(v_h^{},w_h^{}):= \alpha\sum_{i=1}^N \|\calD\|_{0,\infty,\omega_i}^{}\frakh(\bx_i^{})^{d-2}\,v_h^{}(\bx_i^{})w_h^{}(\bx_i^{})\,, \label{def-s-nonlinear}
\end{equation}
where, once again, 
$\alpha>0$ is an non-dimensional constant. 
Following very similar steps to those from the proof
of Theorem~\ref{Theo:Well-posedness},  Method~\eqref{eq:NLFEM} can be proven to have a unique solution $u_h^{}\in \fes$. We note that there appears to be no traceable numerical advantage in including a linearised reaction term in the stabilisation \eqref{def-s-nonlinear}, at least for modestly large values of $p$, so we omit it for simplicity. 

As in the linear case, \eqref{eq:NLFEM} can be linked to a variational inequality, as the following result
(whose proof is totally analogous to that of  Theorem~\ref{Th:Variational-Inequality}) shows.

\begin{theorem}
  \label{the:nl-varin}
  Let $u_h^{}$ be the unique solution of (\ref{eq:NLFEM}). Then, $u_h^+$
  satisfies the variational inequality
  \begin{equation}
    \bi{u_h^+}{v_h^{} - u_h^+}
    +
    \semi{u_h^+}{u_h^+}{v_h^{} - u_h^+}
    \geq
    \ltwop{f}{v_h^{} - u_h^+}_\Omega^{}\qquad
    \forall\, v_h^{} \in V_\calP^+.
  \end{equation}
\end{theorem}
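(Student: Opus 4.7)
The plan is to mirror the proof of Theorem~\ref{Th:Variational-Inequality} almost verbatim, letting the semilinear term $b$ ride along as a ``spectator'' that cancels nicely under the same algebraic manipulation used in the linear case.

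First I would test the discrete equation \eqref{eq:NLFEM} three times: once with an arbitrary $v_h\in V_\calP^{}$, once with its complementary part $v_h^-$, and once with $u_h^+$ (noting that $u_h^+\in V_\calP^{}$, so it is an admissible test function). This yields the three identities
\begin{align*}
a(u_h^+,v_h) + b(u_h^+;u_h^+,v_h) + s(u_h^-,v_h) &= \langle f,v_h\rangle_\Omega, \\
a(u_h^+,v_h^-) + b(u_h^+;u_h^+,v_h^-) + s(u_h^-,v_h^-) &= \langle f,v_h^-\rangle_\Omega, \\
a(u_h^+,u_h^+) + b(u_h^+;u_h^+,u_h^+) + s(u_h^-,u_h^+) &= \langle f,u_h^+\rangle_\Omega.
\end{align*}
Subtracting the second and third from the first, and using the decomposition $v_h = v_h^+ + v_h^-$, the arguments in $a$, $b$, and $s$ all collapse to $v_h^+ - u_h^+$, giving
\begin{equation*}
a(u_h^+, v_h^+ - u_h^+) + b(u_h^+;u_h^+, v_h^+ - u_h^+) + s(u_h^-, v_h^+ - u_h^+) = \langle f, v_h^+ - u_h^+\rangle_\Omega.
\end{equation*}
Observe that no monotonicity, linearity, or other structural property of $b$ is used at this stage; the cancellation is purely formal because $b(u_h^+;u_h^+,\cdot)$ is linear in its third slot.

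Next I would invoke the analogue of \eqref{s-monotone-1} from Lemma~\ref{Lem:s-monotone} for the stabilisation \eqref{def-s-nonlinear}. The proof of that lemma uses only the positivity of the nodal weights and the nodewise sign analysis of $v_h^\pm$, so it applies unchanged to the simpler form \eqref{def-s-nonlinear} (which omits the reaction contribution). For any $w_h\in V_\calP^+$ one has $w_h^- = 0$ and $w_h^+ = w_h$; picking $v_h = w_h$ in the identity above, the inequality $s(u_h^-, w_h - u_h^+)\le 0$ yields
\begin{equation*}
a(u_h^+, w_h - u_h^+) + b(u_h^+;u_h^+, w_h - u_h^+) = \langle f, w_h - u_h^+\rangle_\Omega - s(u_h^-, w_h - u_h^+) \ge \langle f, w_h - u_h^+\rangle_\Omega,
\end{equation*}
which is the claimed variational inequality. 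Finally, $u_h^+\in V_\calP^+$ holds by definition of the constrained part.

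There is no real obstacle: the argument is a direct transplant of the linear case, with $b$ appearing linearly in every test equation so that the subtraction produces $b(u_h^+;u_h^+, v_h^+ - u_h^+)$ cleanly. The only point worth checking is that Lemma~\ref{Lem:s-monotone} carries over to \eqref{def-s-nonlinear}, and this is immediate since the lemma only exploits that the nodal coefficients of $s$ are nonnegative.
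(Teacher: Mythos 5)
Your proposal is correct and follows exactly the route the paper intends: the paper states that the proof is totally analogous to that of Theorem~\ref{Th:Variational-Inequality}, i.e.\ testing \eqref{eq:NLFEM} with $v_h^{}$, $v_h^-$ and $u_h^+$, subtracting, and invoking the sign property \eqref{s-monotone-1} (which indeed only uses the nonnegativity of the nodal weights, hence carries over to \eqref{def-s-nonlinear}). Your observation that $b$ enters only through its (linear) third argument, so the cancellation is purely formal, is precisely why the linear argument transplants verbatim.
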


The following result is the main reason for the use of a quasinorm instead of the norm induced by
the problem. In fact, starting from the last result a C\'ea type estimate can be obtained, but, analogously
to what is reported in \cite{ciarlet2002finite}, that would lead to a suboptimal estimate for certain values of $p$. So,
in the next result we provide an optimal estimate with respect to the quasinorm \eqref{def:quasinorm}.

\begin{theorem}
  \label{the:best-approx}
  Let $u$ solve (\ref{eq:pde}) and $u_h^{} \in\fes$ be the  solution of \eqref{eq:NLFEM}. Then
   \begin{equation}
    \Norm{u - u_h^{+}}_a^2 + \qnorm{u - u_h^{+}}{u,p}^2
    \leq
    C
    \inf_{v_h^{}\in V_\calP^+}
    \qp{
      \Norm{u - v_h^{}}_a^2 + \qnorm{u - v_h^{}}{u,p}^2
    }.
  \end{equation}
\end{theorem}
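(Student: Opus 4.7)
The plan is to derive the near-best approximation estimate by combining the variational inequality satisfied by $u_h^+$ (Theorem \ref{the:nl-varin}) with the continuous weak formulation \eqref{eq:weakform}, and then exploiting the strong monotonicity \eqref{eq:coer} together with the bounded nonlinearity estimate \eqref{eq:bdd}. Fix an arbitrary $v_h \in V_\calP^+ \subset \mathcal{X}$ and set $e := u - u_h^+$. Testing \eqref{eq:weakform} with $v_h - u_h^+ \in \mathcal{X}$ and subtracting the variational inequality from Theorem~\ref{the:nl-varin} yields
\begin{equation*}
a(e, v_h - u_h^+) + \bigl[ b(u;u,v_h - u_h^+) - b(u_h^+; u_h^+, v_h - u_h^+) \bigr] \ge 0.
\end{equation*}

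The key algebraic step is to decompose the test function as $v_h - u_h^+ = (v_h - u) + e$. Substituting and rearranging so that all terms involving $e$ alone appear on the left, I would obtain
\begin{equation*}
\|e\|_a^2 + \bigl[ b(u;u,e) - b(u_h^+;u_h^+,e) \bigr]
\le
a(e, u - v_h) + \bigl[ b(u;u,u-v_h) - b(u_h^+;u_h^+,u-v_h) \bigr].
\end{equation*}
The second bracketed term on the left-hand side is precisely what the strong monotonicity \eqref{eq:coer} controls from below, giving the lower bound $C_C \qnorm{e}{u,p}^2$.

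It remains to absorb the right-hand side. For the diffusion part, a Cauchy-Schwarz inequality in the $a$-norm followed by Young's inequality produces $\tfrac{1}{2}\|e\|_a^2 + \tfrac{1}{2}\|u-v_h\|_a^2$. For the nonlinear contribution I would invoke \eqref{eq:bdd} with $w = u - v_h$, yielding a bound of the form $C_B\bigl( \theta \qnorm{e}{u,p}^2 + \theta^{-p+1} \qnorm{u-v_h}{u,p}^2 \bigr)$ for any $\theta \in (0,1]$. Choosing $\theta$ small enough so that $C_B \theta \le C_C/2$ allows the $\qnorm{e}{u,p}^2$ term to be absorbed into the left-hand side, while $\theta$ simultaneously becomes a fixed constant (depending only on $p$ through $C_B$ and $C_C$).

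Combining these estimates gives
\begin{equation*}
\tfrac{1}{2}\|e\|_a^2 + \tfrac{C_C}{2} \qnorm{e}{u,p}^2
\le
\tfrac{1}{2}\|u-v_h\|_a^2 + C \qnorm{u-v_h}{u,p}^2,
\end{equation*}
and taking the infimum over $v_h \in V_\calP^+$ concludes the argument. The main subtlety, though not a serious obstacle, is the correct handling of the asymmetry introduced by \eqref{eq:bdd}, whose right-hand side involves $\qnorm{\cdot}{u,p}^2$ (measured against $u$) rather than against $u_h^+$; because the quasinorm used on both sides of the target inequality is anchored at $u$, this works out cleanly, and no additional perturbation or equivalence of quasinorms is needed.
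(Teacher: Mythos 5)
Your argument is essentially the paper's own proof: combine the continuous weak form with the discrete variational inequality of Theorem~\ref{the:nl-varin}, split $v_h^{} - u_h^+ = (v_h^{} - u) + (u - u_h^+)$, bound the left-hand side from below via the monotonicity \eqref{eq:coer}, treat the right-hand side with Cauchy--Schwarz/Young for the $a$-part and \eqref{eq:bdd} with a suitably small $\theta\in(0,1]$ for the $b$-part, then absorb and take the infimum over $V_\calP^+$. One sign slip: since the weak form gives equality with $\langle f, v_h^{}-u_h^+\rangle_\Omega^{}$ while the variational inequality gives $\ge$, your first display should read $\le 0$ rather than $\ge 0$; your second display is, however, the correct consequence of the correct relation, so the remainder of the argument goes through unchanged.
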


\begin{proof}
  Using the coercivity  \eqref{eq:coer} of $b$ followed by  Theorem~\ref{the:nl-varin} we get
  \begin{equation}
    \begin{split}
      C_C^{}\qp{
        \Norm{u - u_h^+}_a^2
        +
        \qnorm{u - u_h^+}{u,p}^2
      }
      &\leq
      \bi{u - u_h^+}{u - u_h^+}
      +
      \semi{u}{u}{u-u_h^+}
      -
      \semi{u_h^+}{u_h^+}{u-u_h^+}
      \\
      &\leq
      \bi{u - u_h^+}{u - v_h^{}}
      +
      \semi{u}{u}{u-v_h^{}}
      -
      \semi{u_h^+}{u_h^+}{u-v_h^{}},
    \end{split}
  \end{equation}
  for every $v_h^{}\in V_\calP^+$. Next, using Young's inequality and 
  \eqref{eq:bdd} we have, for all $\theta\in (0,1]$,
  \begin{equation}
    \begin{split}
      C_C^{}
      \qp{
        \Norm{u - u_h^{+}}_a^2
        +
        \qnorm{u - u_h^{+}}{u,p}^2
      }
      &\leq
      \frac{C_C^{}}4 \Norm{u - u_h^+}_a^2
      +
      C_C^{} \Norm{u - v_h^{}}_a^2
      \\
      &\qquad +
      C_B^{}
      \qp{\theta \qnorm{u-u_h^+}{u,p}^2
        +
        \theta^{\textcolor{violet}{-p+1}} \qnorm{u-v_h^{}}{u,p}^2
      }.
    \end{split}
  \end{equation}
  Choosing $\theta = \min\qp{\frac{C_C^{}}{4C_B^{}}, \frac 1 2}$ and rearranging the inequality yields the desired result. 
\end{proof}

\section{Numerical tests}\label{Sec:Numer}

In these tests we detail some aspects of our implementation and
showcase the methodology looking at the symmetric problem
(\ref{reaction-diffusion}) and its nonlinear counterpart
(\ref{eq:pde}) in 2d. We recall once again that all references
to the {\sl numerical solution} refer to the function $u_h^+\in V_\calP^+$, and
not the function $u_h^{}$.

To linearise the problem, we pose the following Richardson-like
iterative approximation for (\ref{FEM}): Given $u^0$ and $\omega\in
(0,1]$, for each $n=0, 1, \dots$ find $u^{n+1}$ such that
\begin{equation}
  \label{eq:iterative}
  a(u^{n+1}, v)
  =
  a(u^{n}, v) + \omega \qp{\langle f, v \rangle_\Omega^{} - a(\qp{u^n}^+, v) - s(\qp{u^n}^-, v)}.
\end{equation}

We initialise the finite element approximation of (\ref{eq:iterative})
{by the} Galerkin approximation, that is we set $u^0_h \in \fes$
such that, for all $v_h \in \fes$
\begin{equation}
  a(u^0_h, v_h^{}) = \langle f, v_h^{}\rangle_\W^{}.
\end{equation}
Then, the approximation to the iteration (\ref{eq:iterative}) becomes
for each $n=0,\dots, N-1$ find $u^{n+1}_h \in \fes$ such that for all
$v_h^{} \in \fes$
\begin{equation}
  \label{eq:iterativeFEM}
  a(u^{n+1}_h, v_h^{})
  =
  a(u^{n}_h, v_h^{})
  +
  \omega \qp{\langle f, v_h^{}\rangle_\W^{} - a(\qp{u^n_h}^+, v_h^{}) - s(\qp{u^n_h}^-, v_h^{})}.
\end{equation}
In each experiment we take $\alpha = 1$ within the stabilisation. The linear systems 
arising in \eqref{eq:iterativeFEM} are solved using an LU decomposition within the Eigen
library. The linearisation was terminated when $\Norm{u_h^{n+1} -
  u_h^n}_{0,\W} \leq 10^{-12}$.

\subsection{Convergence on a regular grid with a smooth solution}

We first consider $\calD = \epsilon \calI$,  {where $\calI$ denotes the $2\times 2$ identity matrix}, 
with $\epsilon = 10^{-5}$, $\mu = 1$,
and set $f$ such that the function
\begin{equation}
  u(x,y) = \sin(\pi x) \sin(\pi y)
\end{equation}
solves the problem
(\ref{reaction-diffusion}) over the unit square. 

Convergence results for piecewise linear, $k=1$, and quadratic, $k=2$,
elements over a sequence of uniformily refined criss-cross meshes are
shown in Figures \ref{fig:sin1} and \ref{fig:sin2} respectively.  In
line with the error estimates from Section~\ref{Sec:error}, the method
converges with optimal rate in the function approximation sense for
both $k=1$ and $k=2$. Notice also that the number of iterations of the
linearisation decreases as a function of $h$. {Here,
  and thereafter, EOC stands for \textit{estimated order of
    convergence}.}

\begin{figure*}[h!]
    \centering
    \begin{subfigure}[t]{0.5\textwidth}
        \centering
        \includegraphics[width=1.\textwidth]{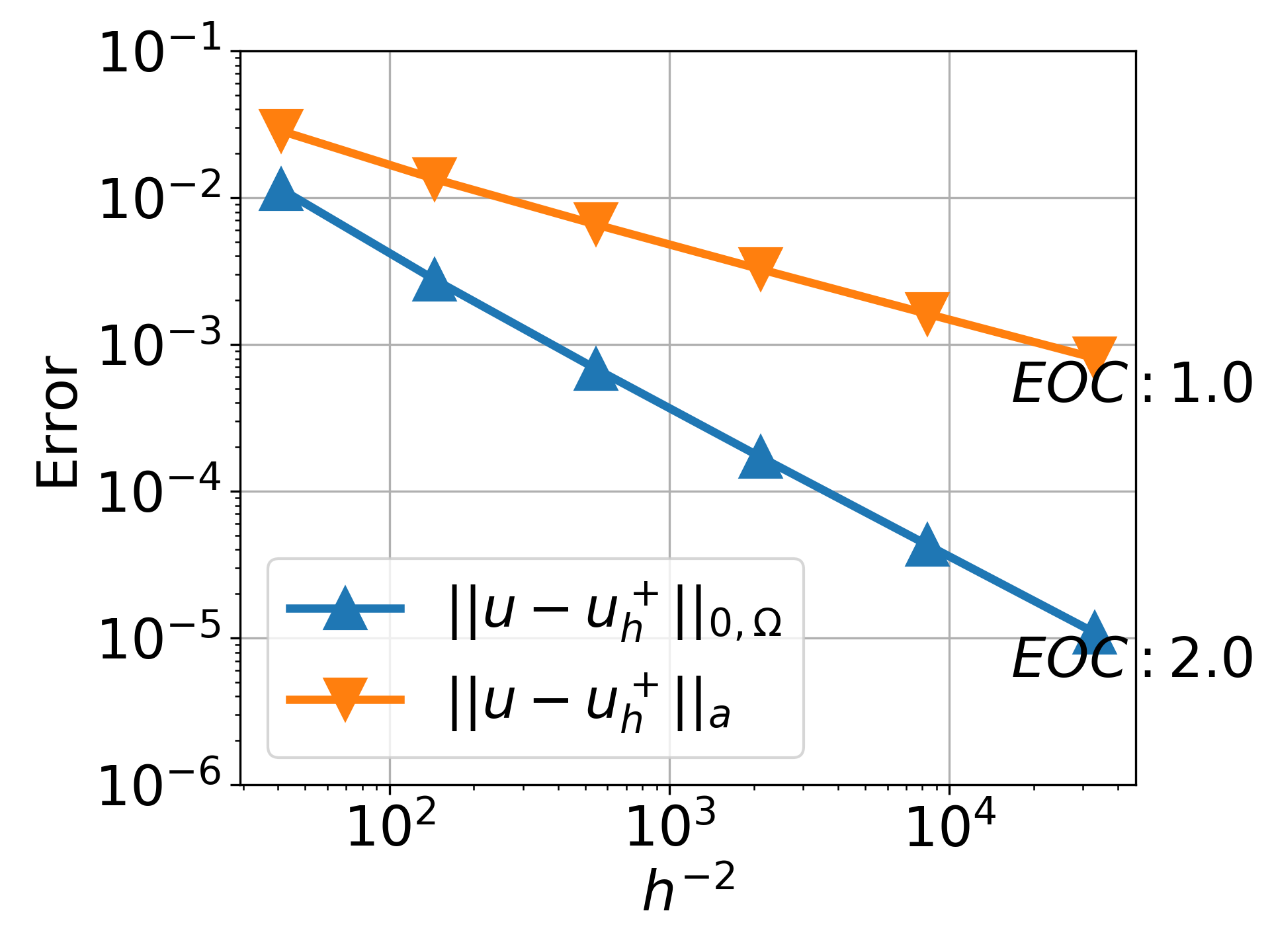}
        \caption{Convergence in the $L^2$ and energy norms.}
    \end{subfigure}%
    ~
    \begin{subfigure}[t]{0.5\textwidth}
        \centering
        \includegraphics[width=1.\textwidth]{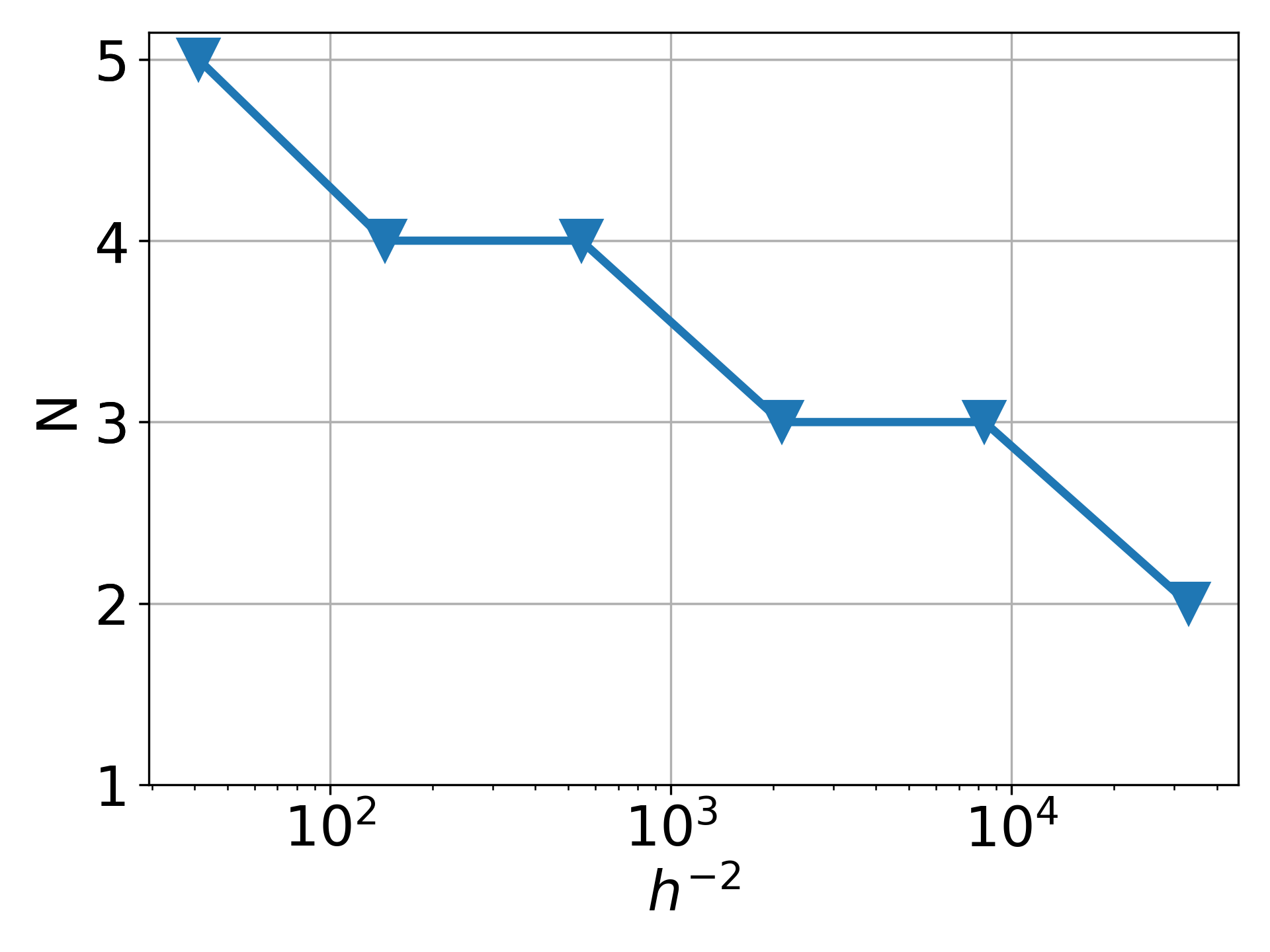}
        \caption{Number of iterations to achieve convergence as a
          function of the meshsize.}
    \end{subfigure}
    \caption{\label{fig:sin1} We test the convergence of a piecewise
      linear conforming approximation given by (\ref{eq:iterativeFEM})
      on a sequence of concurrently refined criss-cross meshes. The
      Richardson iteration converges for $\omega=1$. }
\end{figure*}

\begin{figure*}[h!]
    \centering
    \begin{subfigure}[t]{0.5\textwidth}
        \centering
        \includegraphics[width=1.\textwidth]{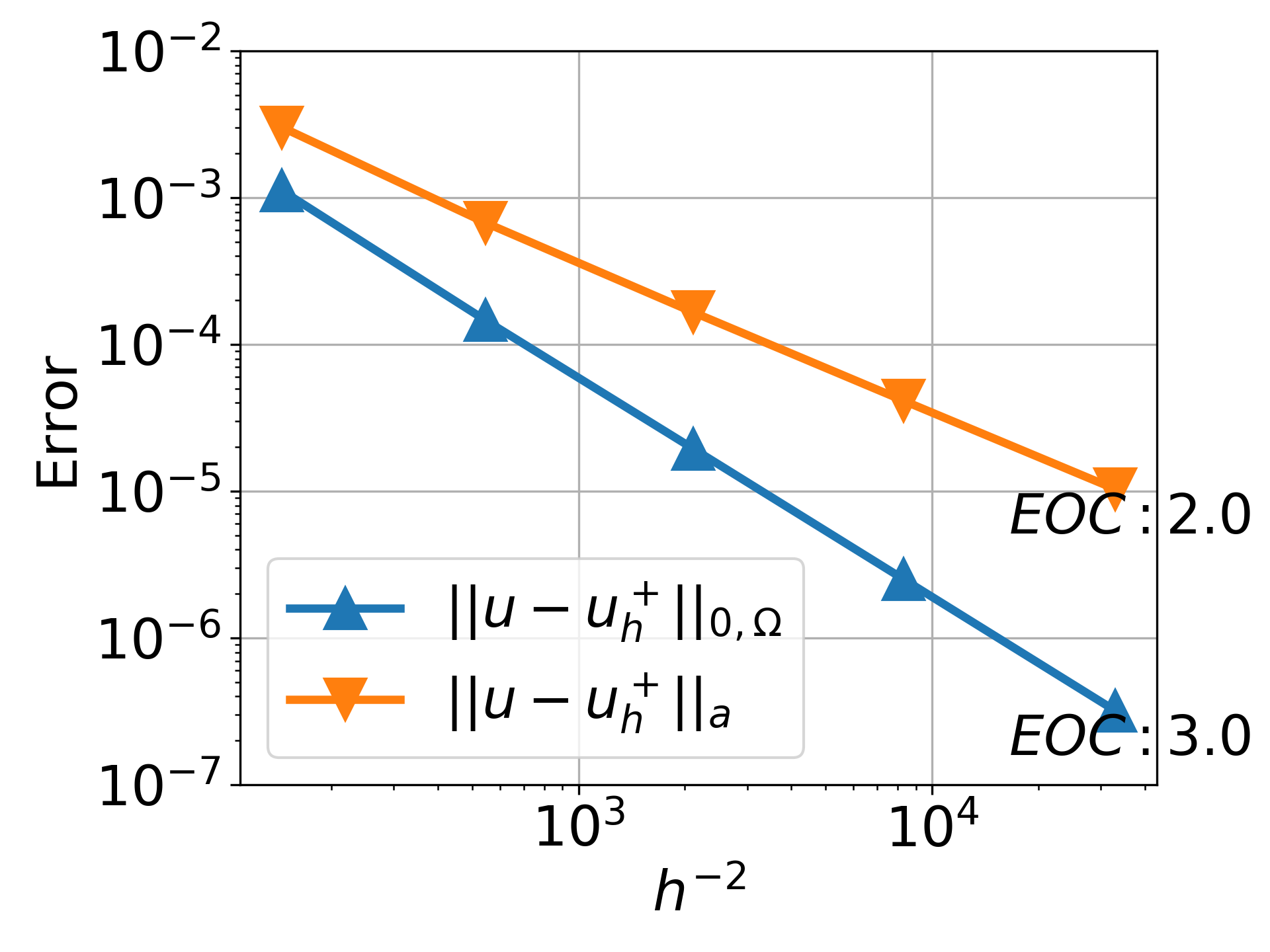}
        \caption{Convergence in the $L^2$ and energy norms.}
    \end{subfigure}%
    ~
    \begin{subfigure}[t]{0.5\textwidth}
        \centering
        \includegraphics[width=1.\textwidth]{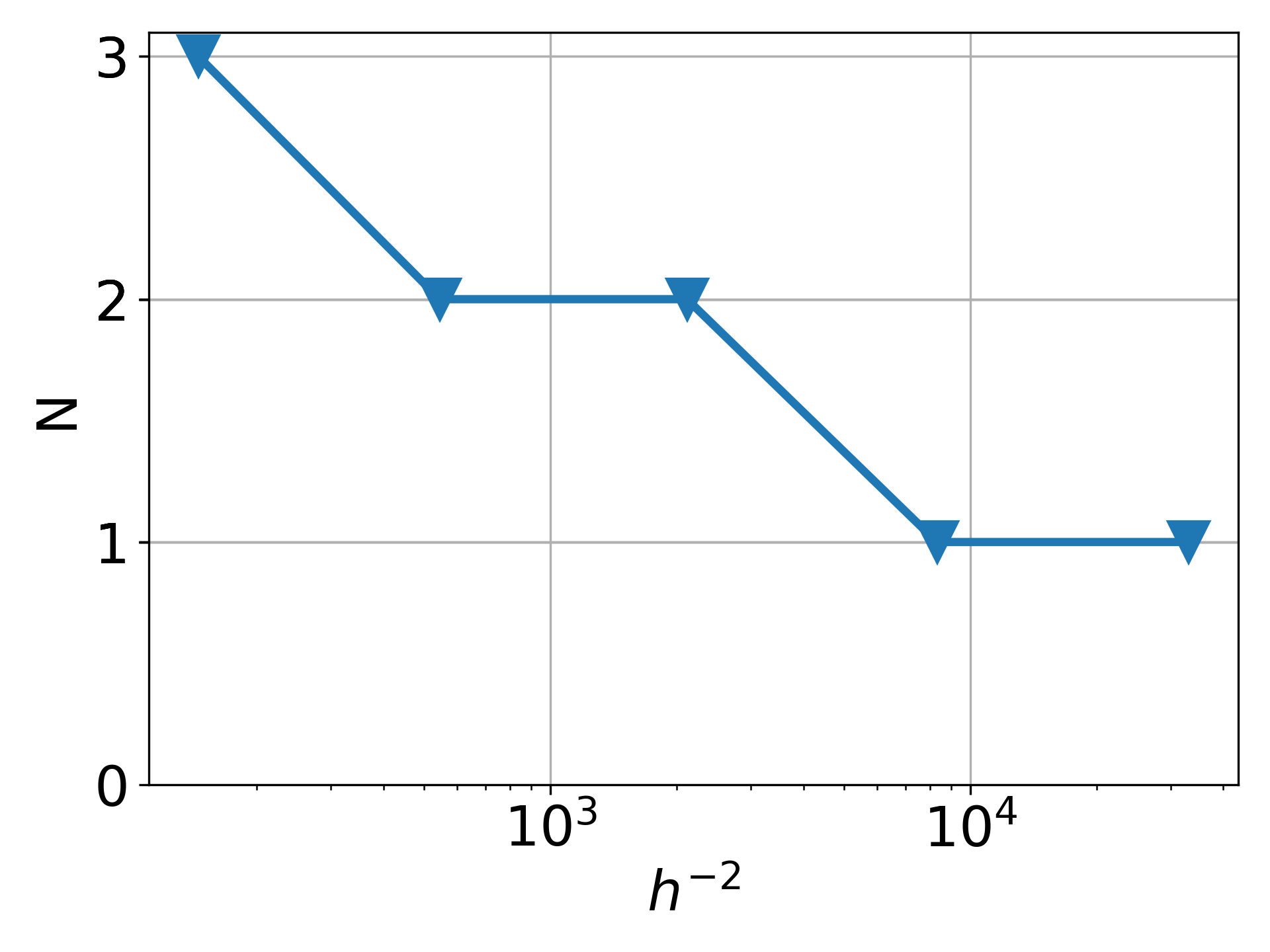}
        \caption{Number of iterations to achieve convergence as a
          function of the meshsize.}
    \end{subfigure}
    \caption{\label{fig:sin2} We test convergence of a piecewise
      quadratic conforming approximation (\ref{eq:iterativeFEM}) on a
      sequence of concurrently refined criss-cross meshes. The
      Richardson iteration converges for $\omega=1$.}
\end{figure*}

\subsection{Convergence on an obtuse grid with a smooth solution}

We again consider $\calD = \epsilon \calI$, with $\epsilon = 10^{-5}$, $\mu = 1$.
The analytical solution of \eqref{reaction-diffusion} over $(-1,1)\times (0,1)$  {is taken as}
\begin{equation}
  u(x,y) = \sin(\pi (x+1)/2) \sin(\pi y)\,,
\end{equation}
and  we compute $f$ accordingly. Notice that $u(\bx)\in[0,1]$ for all $\bx\in\Omega$.
We pose the problem over a triangulation
with obtuse elements as described in \cite{BKK09} illustrated in
Figure \ref{fig:obtuse-refinements}. This  was used in \cite{BKK09}
as an example of triangulations for which the finite element method does not satisfy the
discrete maximum principle, even for the Poisson equation; so, it  poses a
challenge to the finite element method as solutions do not in general
satisfy DMP even if $\epsilon \gg 1$.

Convergence results for piecewise linear elements over this mesh are
given in Figure \ref{fig:obtuse-conv}. The method converges optimally
and similar results are observed for higher order elements. Notice
that the linearisation takes more iterations to achieve convergence, 
which is to be expected, as the Galerkin solution will, very likely, never 
respect the bounds for the problem, regardless of how fine the mesh is
(the iteration count remains, nevertheless, low).

\begin{figure*}[h!]
  \centering
    \begin{subfigure}[t]{0.3\textwidth}
        \centering
        \includegraphics[width=1.\textwidth]{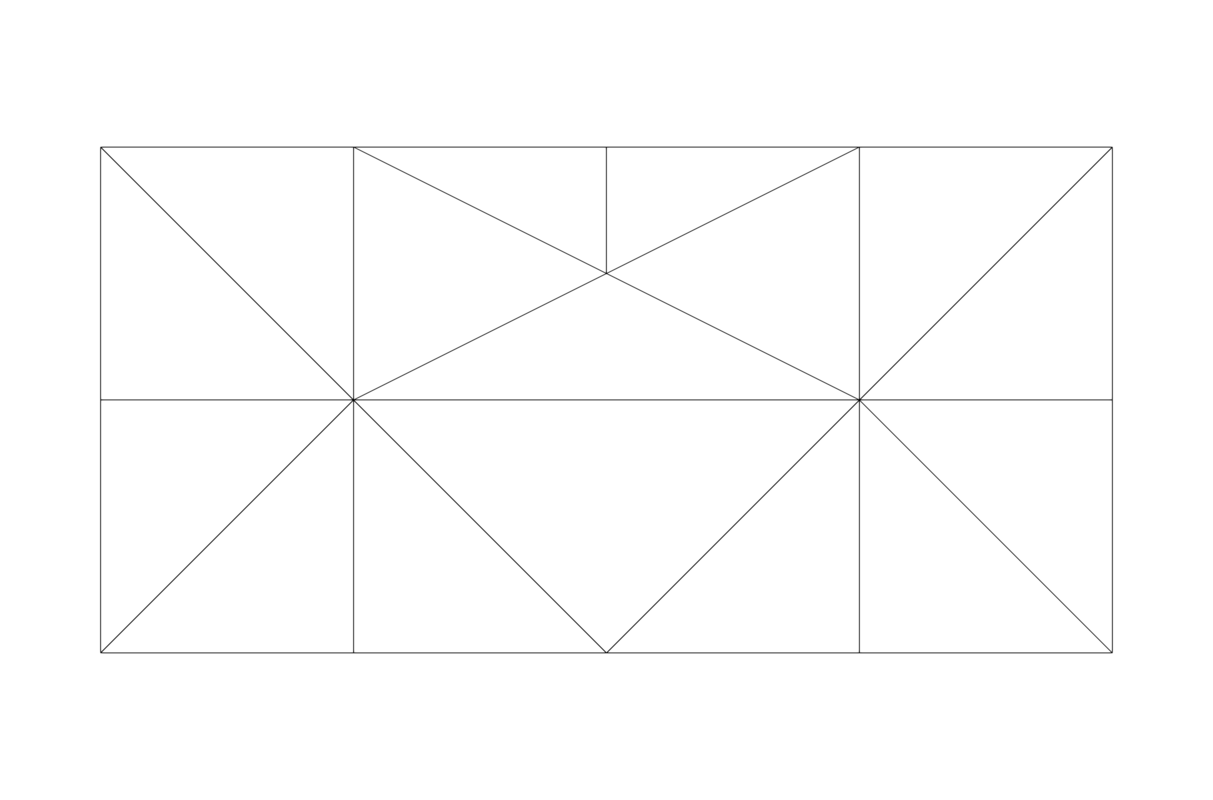}
        \caption{Iteration 1.}
    \end{subfigure}%
    ~
    \begin{subfigure}[t]{0.3\textwidth}
        \centering
        \includegraphics[width=1.\textwidth]{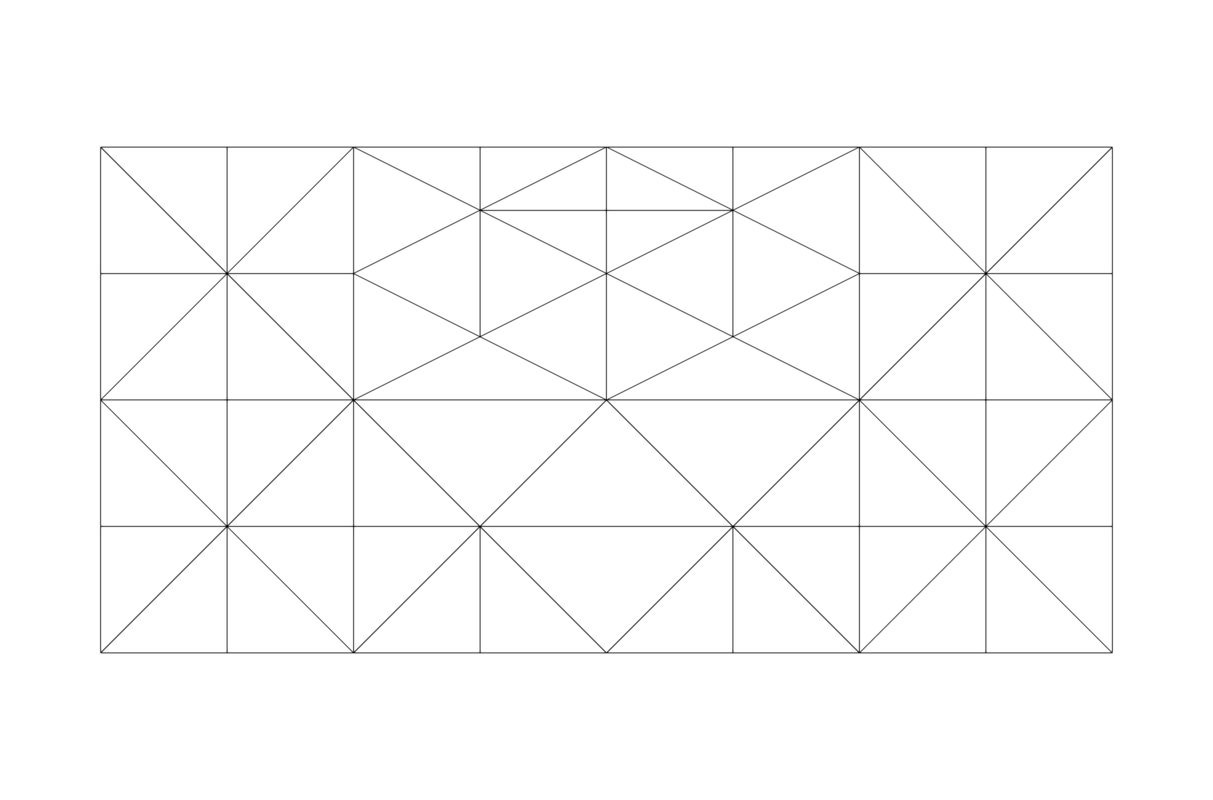}
        \caption{Iteration 2.}
    \end{subfigure}
    \begin{subfigure}[t]{0.3\textwidth}
        \centering
        \includegraphics[width=1.\textwidth]{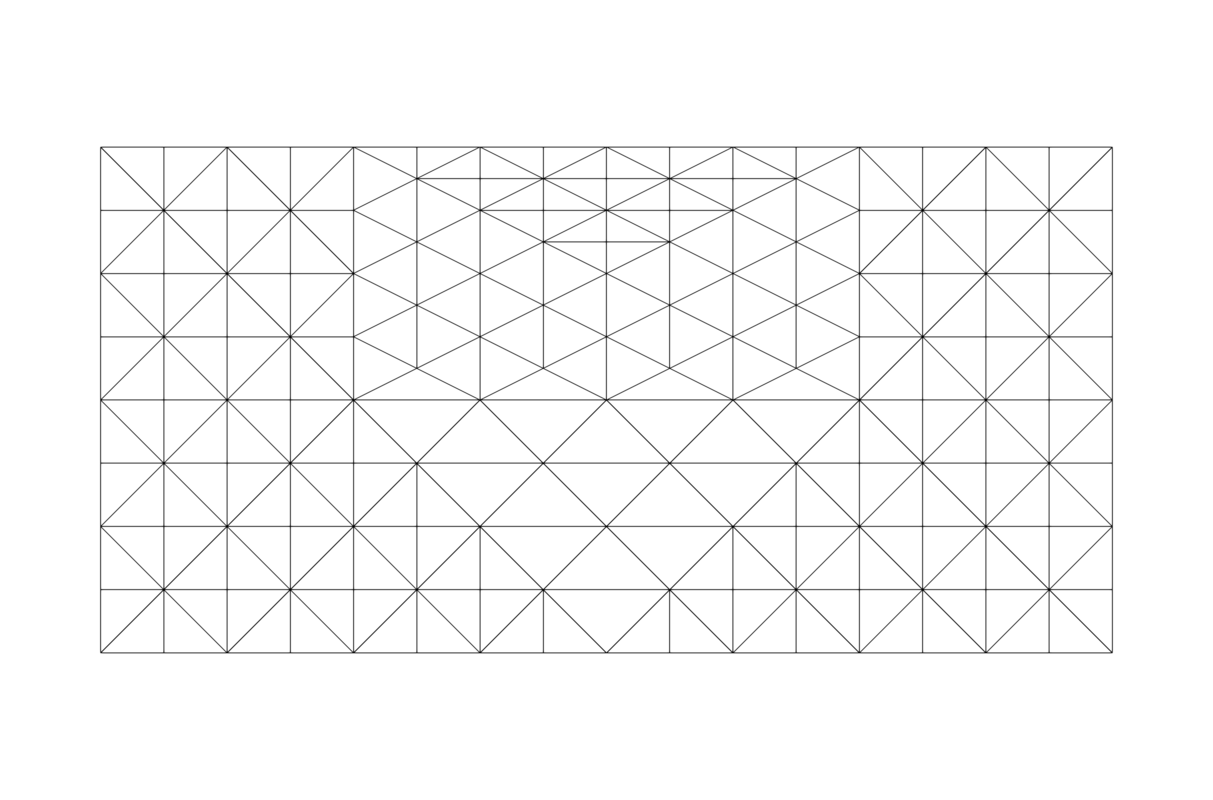}
        \caption{Iteration 3.}
    \end{subfigure}
    \caption{
      \label{fig:obtuse-refinements}
      Three refinements of the mesh with a single obtuse
      triangle from \cite{BKK09}. The result is a layer of obtuse triangles.}
\end{figure*}

\begin{figure*}[h!]
    \centering
    \begin{subfigure}[t]{0.5\textwidth}
        \centering
        \includegraphics[width=1.\textwidth]{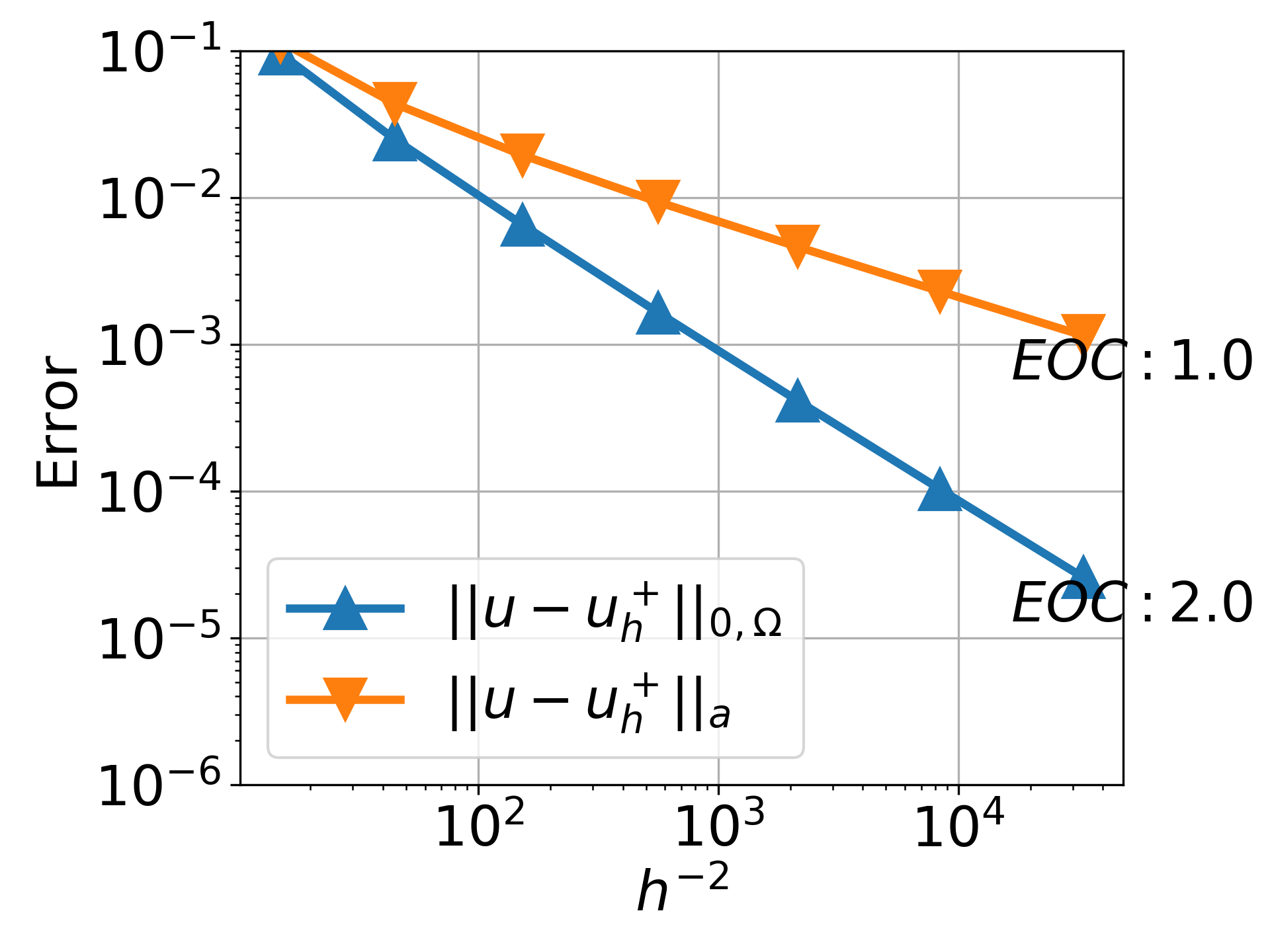}
        \caption{Convergence in the $L^2$ and energy norms.}
    \end{subfigure}%
    ~
    \begin{subfigure}[t]{0.5\textwidth}
        \centering
        \includegraphics[width=1.\textwidth]{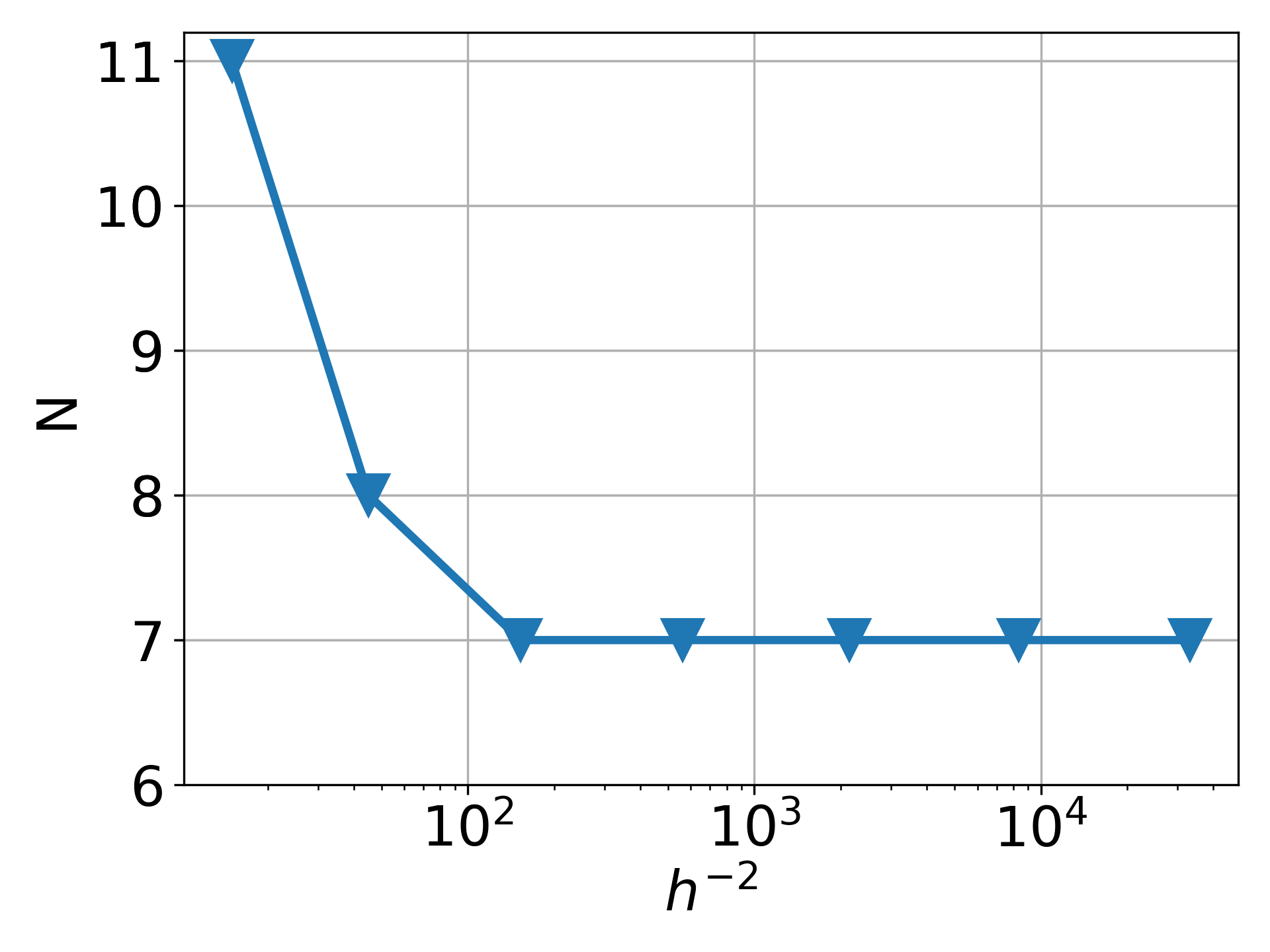}
        \caption{Number of iterations to achieve convergence as a
          function of the meshsize.}
    \end{subfigure}
    \caption{\label{fig:obtuse-conv} We test convergence of a
      piecewise linear conforming approximation
      (\ref{eq:iterativeFEM}) on a sequence of concurrently refined
      meshes with obtuse elements as illustrated in Figure
      \ref{fig:obtuse-refinements}. Here the Richardson linearisation
      converges for $\omega =1$.}
\end{figure*}

\subsection{Resolution of boundary layers}

Consider the problem
\begin{equation}
  \label{eq:layers}
  \begin{split}
    -\epsilon\Delta u + u &= 1 \text{ in } \Omega\,,
    \\
    u &= 0 \text{ on } \partial \Omega.
  \end{split}
\end{equation}
We fix $h\approx 0.02$ on a criss-cross and vary $\epsilon \in [10^{-2}, 10^{-7}]$. For
particularly small $\epsilon$ the Richardson iteration required
dampening for convergence. With $\epsilon > 10^{-5}$, we use $\omega =
1$ and convergence was achieved within 4 iterations. When $\epsilon
\leq 10^{-5}$, $\omega = \tfrac 12$ is sufficient for convergence
with fewer than 46 iterations in each case. The most challenging case
being the smallest value of $\epsilon$. Computed solutions for
different values of $\epsilon$ are shown in Figure
\ref{fig:solutions-hom-dir}.

\begin{figure*}[h!]
    \centering
    \begin{subfigure}[t]{0.3\textwidth}
        \centering
        \includegraphics[width=1.\textwidth]{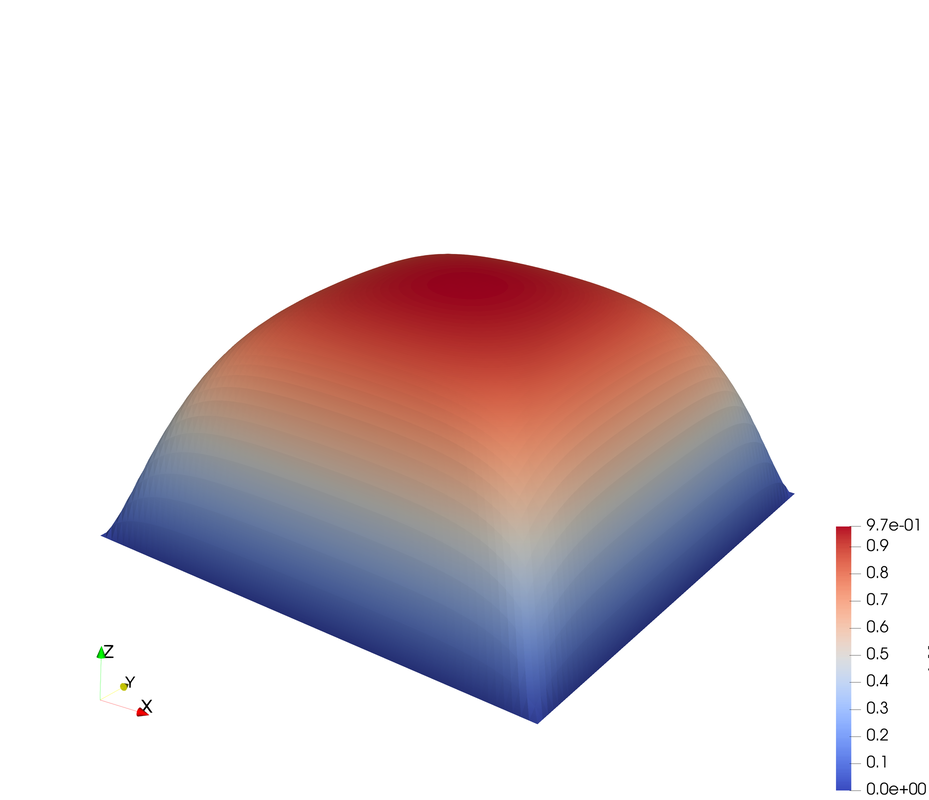}
        \caption{$\epsilon = 10^{-2}$.}
    \end{subfigure}%
    ~
    \begin{subfigure}[t]{0.3\textwidth}
        \centering
        \includegraphics[width=1.\textwidth]{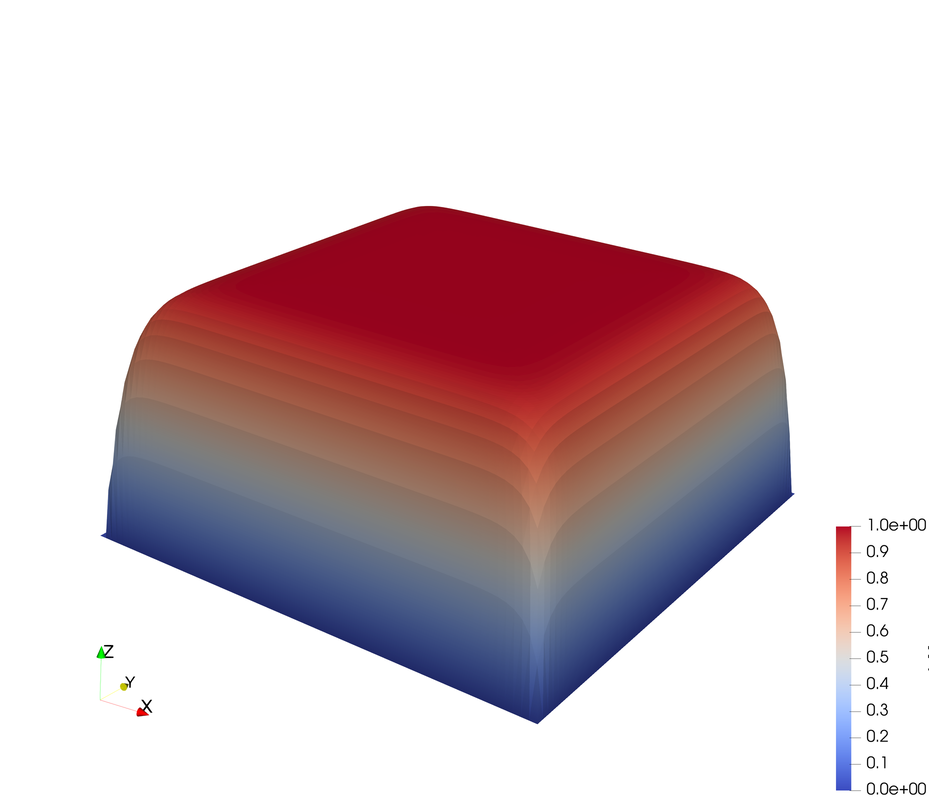}
        \caption{$\epsilon = 10^{-3}$.}
    \end{subfigure}%
    ~
    \begin{subfigure}[t]{0.3\textwidth}
        \centering
        \includegraphics[width=1.\textwidth]{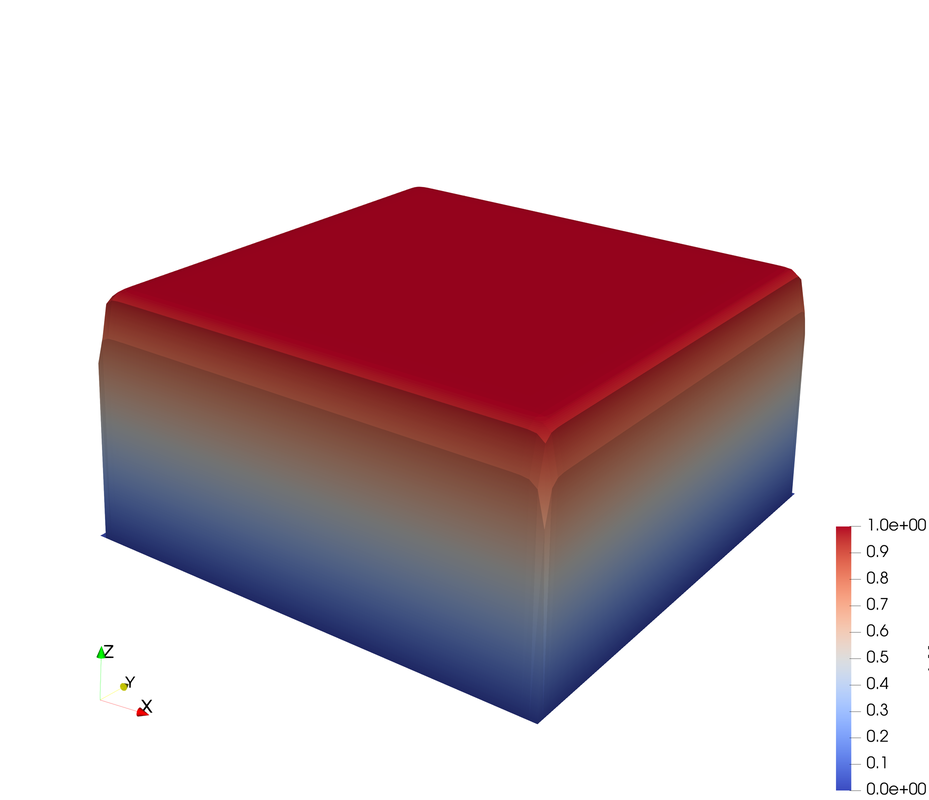}
        \caption{$\epsilon = 10^{-4}$.}
    \end{subfigure}%
    \\
    \begin{subfigure}[t]{0.3\textwidth}
        \centering
        \includegraphics[width=1.\textwidth]{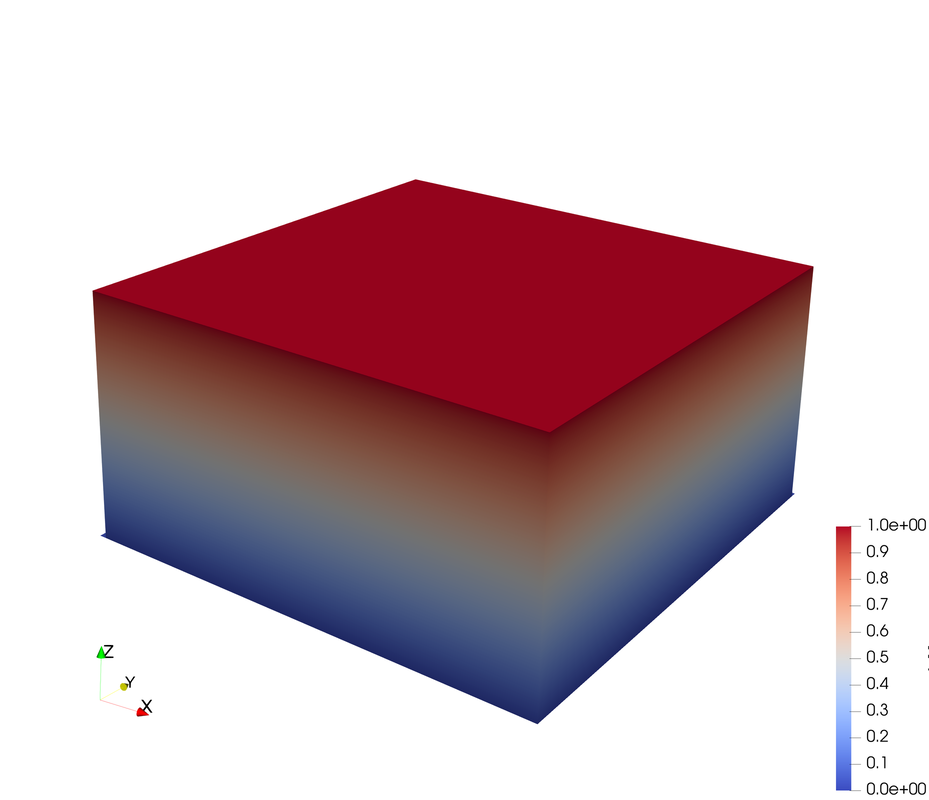}
        \caption{$\epsilon = 10^{-5}$.}
    \end{subfigure}%
    ~
    \begin{subfigure}[t]{0.3\textwidth}
        \centering
        \includegraphics[width=1.\textwidth]{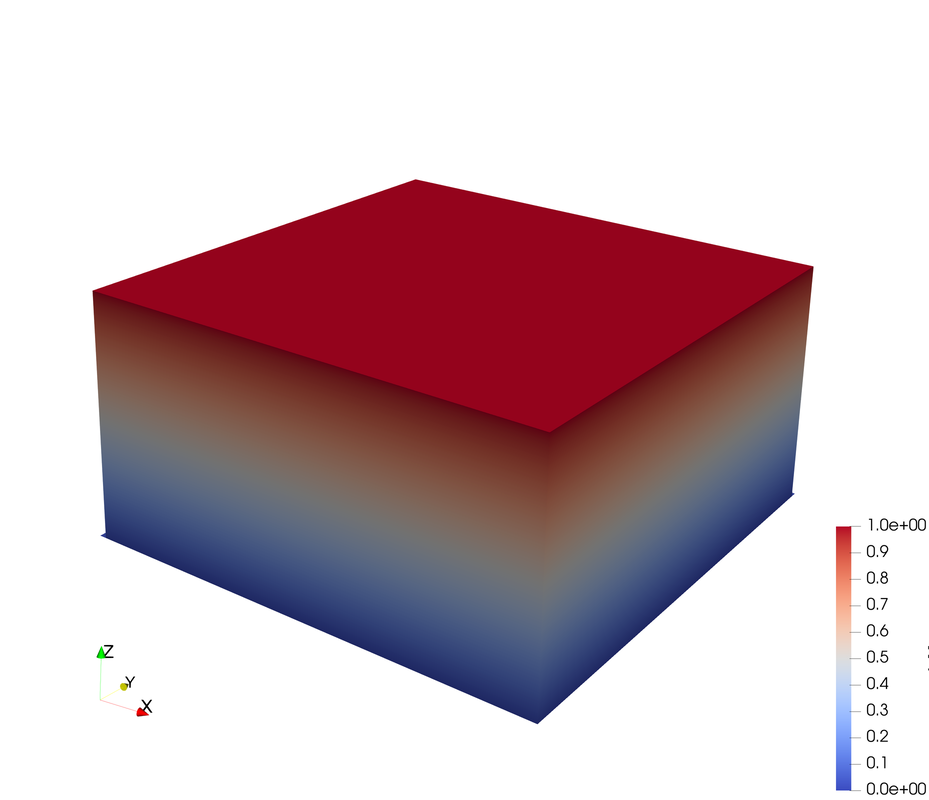}
        \caption{$\epsilon = 10^{-6}$.}
    \end{subfigure}%
    ~
    \begin{subfigure}[t]{0.3\textwidth}
        \centering
        \includegraphics[width=1.\textwidth]{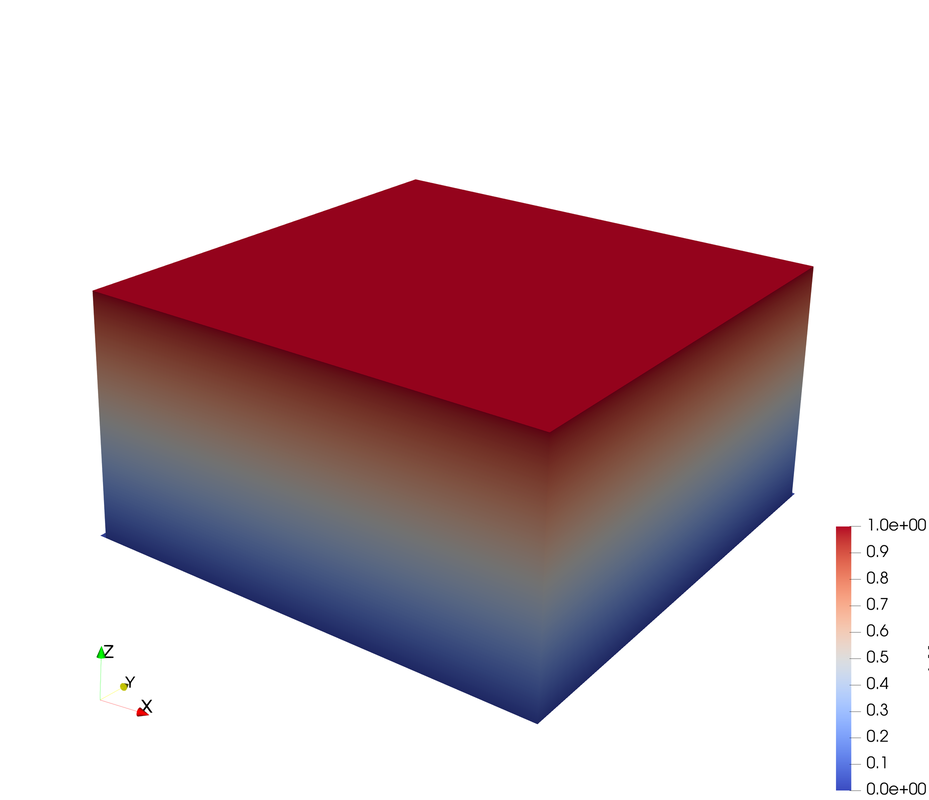}
        \caption{$\epsilon = 10^{-7}$.}
    \end{subfigure}%
    \caption{\label{fig:solutions-hom-dir} Elevations of the
      approximation to (\ref{eq:layers}) for fixed $h$ and different
      values of $\epsilon$. We notice the absence of oscillations 
      even for particularly small values of $\epsilon$.}
\end{figure*}

\subsection{Resolution of boundary layers with discontinuous Dirichlet conditions}

Consider the problem
\begin{equation}
  \label{eq:discbc}
  \begin{split}
    -\epsilon\Delta u + u &= 0 \text{ in } \Omega\,,
    \\
    u &= g_D^{}
    \text{ on } \partial \Omega,
  \end{split}
\end{equation}
where $g_D^{} = 1$ on $[0,\tfrac 12]\times \{0\}$, $g_D^{} = 0$ on
$(\tfrac 12, 1]\times \{0\}$ and periodically follows the same pattern
  counter-clockwise. We fix $h\approx 0.02$ on a criss-cross mesh and vary
  $\epsilon \in [10^{-2}, 10^{-7}]$. For particularly small $\epsilon$
  the Richardson iteration required dampening for convergence. With
  $\epsilon > 10^{-5}$ we used $\omega = 1$ and convergence was
  achieved within 5 iterations. {When $\epsilon \leq 10^{-5}$, then} $\omega =
  0.5$ provided a convergent algorithm and took fewer than 40
  iterations in each case. Figure \ref{fig:solutions-non-hom-dir}
  shows some solutions.

\begin{figure*}[h!]
    \centering
    \begin{subfigure}[t]{0.3\textwidth}
        \centering
        \includegraphics[width=1.\textwidth]{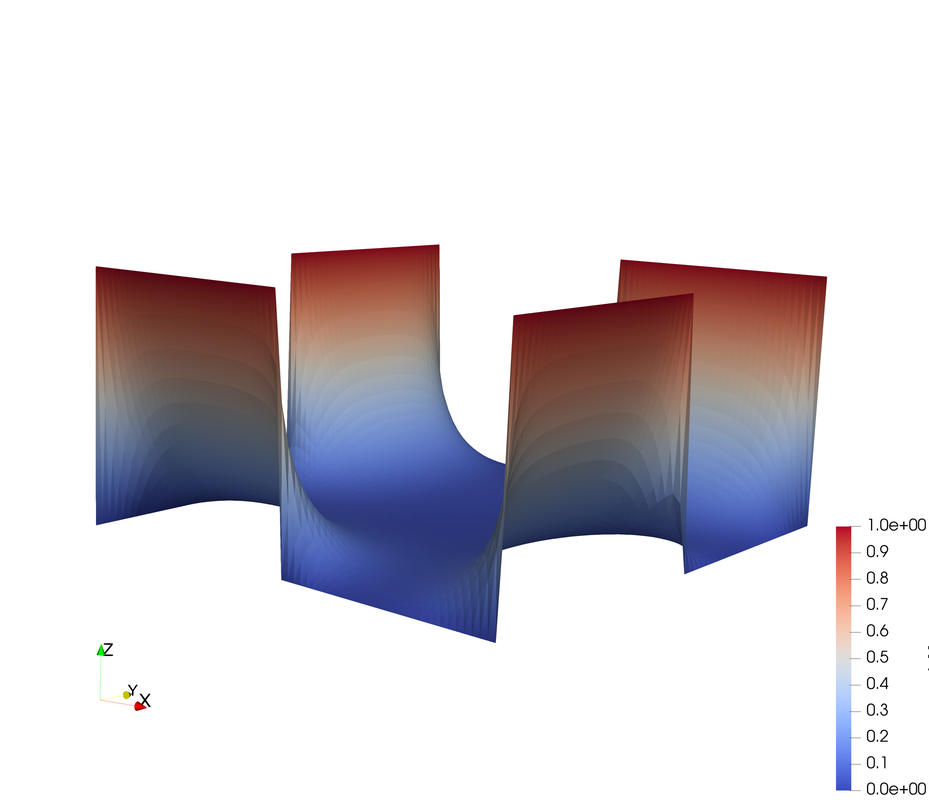}
        \caption{$\epsilon = 10^{-2}$.}
    \end{subfigure}%
    ~
    \begin{subfigure}[t]{0.3\textwidth}
        \centering
        \includegraphics[width=1.\textwidth]{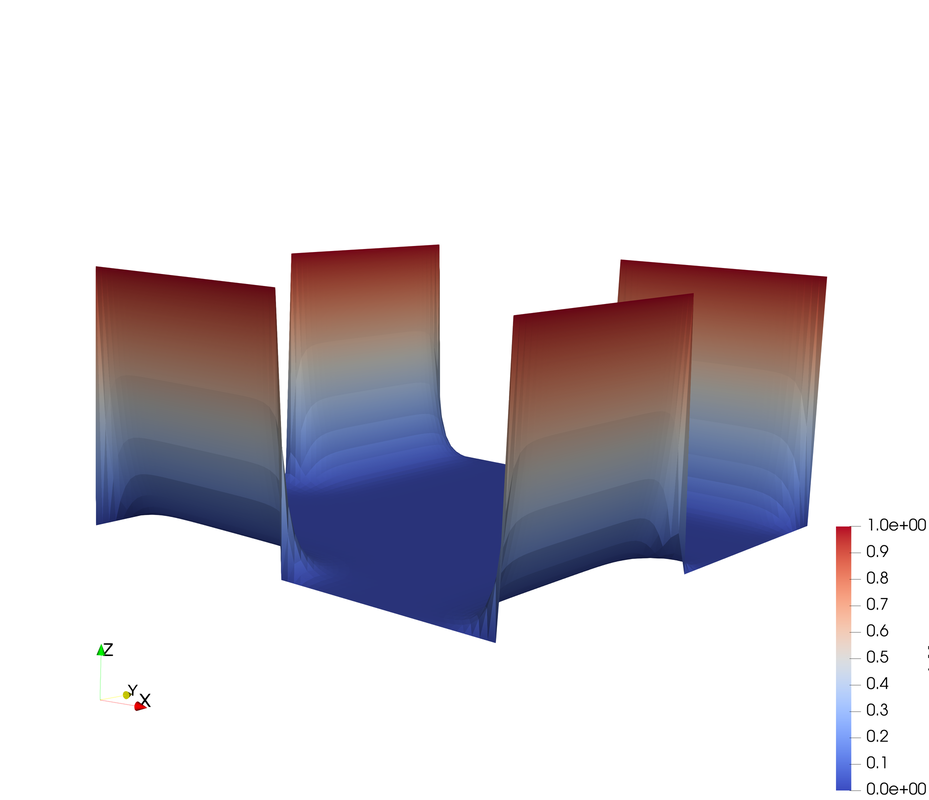}
        \caption{$\epsilon = 10^{-3}$.}
    \end{subfigure}%
    ~
    \begin{subfigure}[t]{0.3\textwidth}
        \centering
        \includegraphics[width=1.\textwidth]{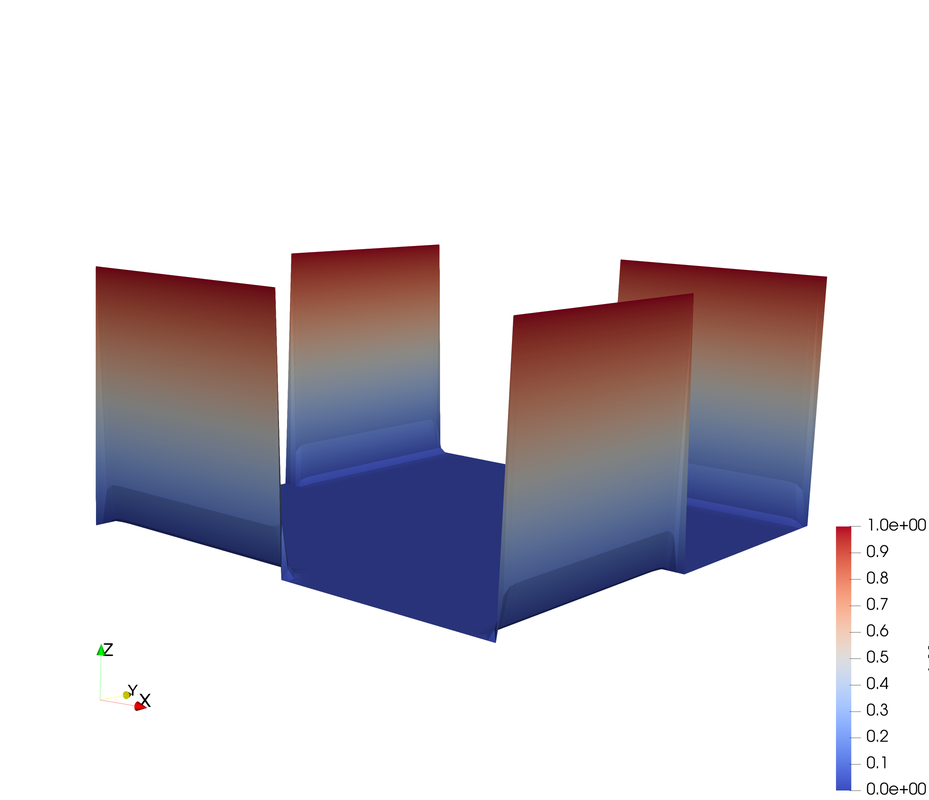}
        \caption{$\epsilon = 10^{-4}$.}
    \end{subfigure}%
    \\
    \begin{subfigure}[t]{0.3\textwidth}
        \centering
        \includegraphics[width=1.\textwidth]{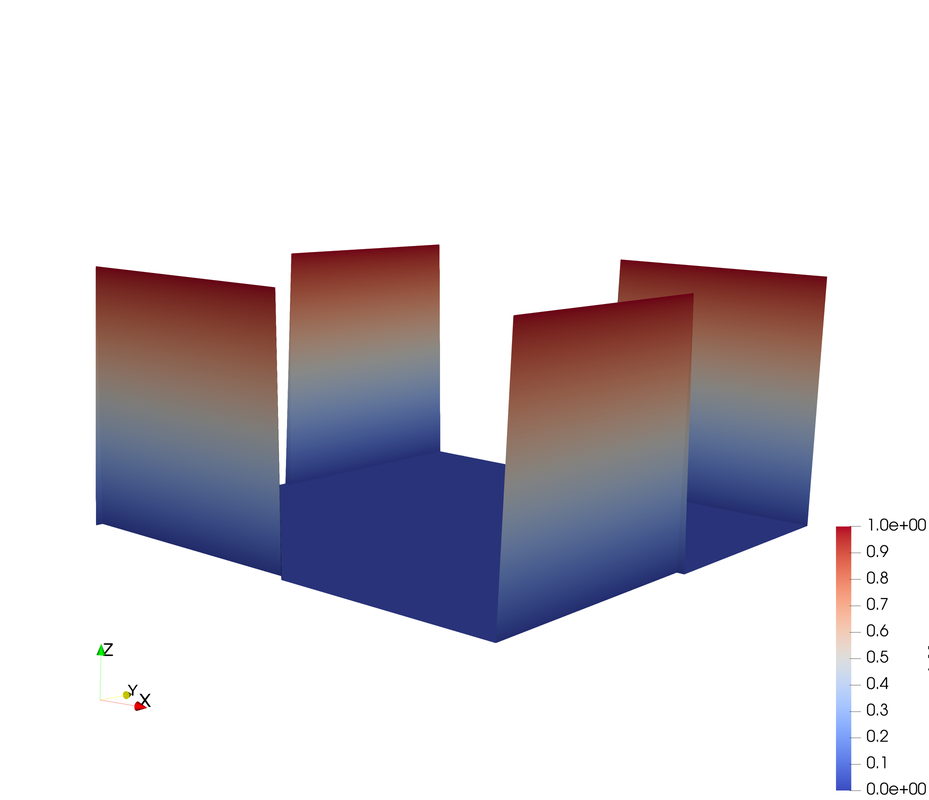}
        \caption{$\epsilon = 10^{-5}$.}
    \end{subfigure}%
    ~
    \begin{subfigure}[t]{0.3\textwidth}
        \centering
        \includegraphics[width=1.\textwidth]{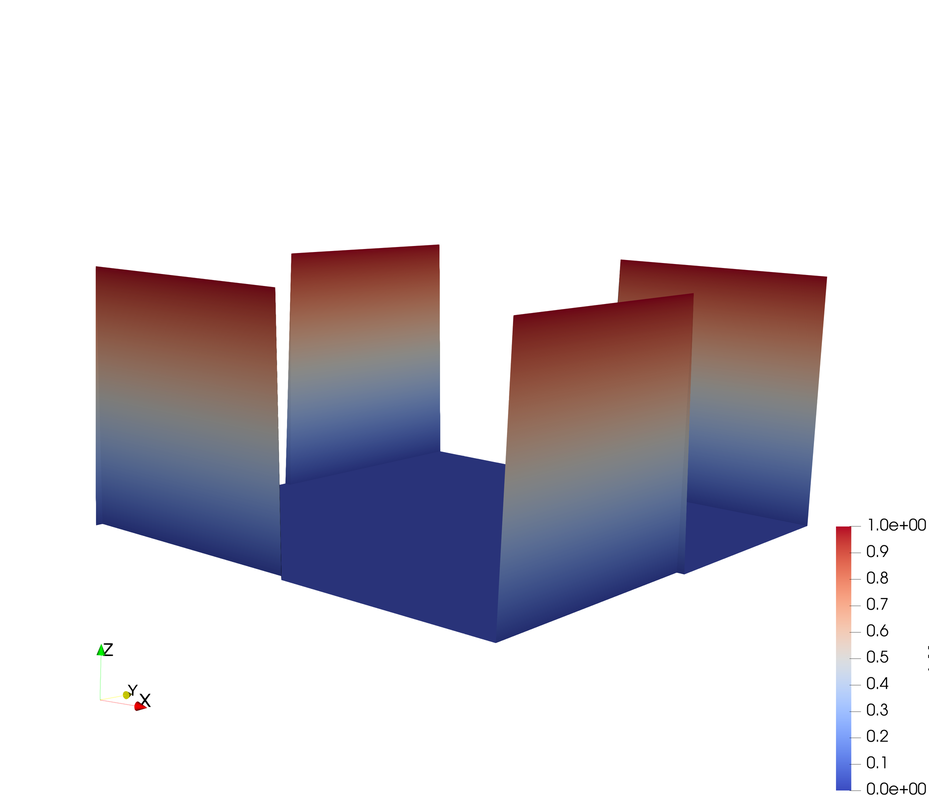}
        \caption{$\epsilon = 10^{-6}$.}
    \end{subfigure}%
    ~
    \begin{subfigure}[t]{0.3\textwidth}
        \centering
        \includegraphics[width=1.\textwidth]{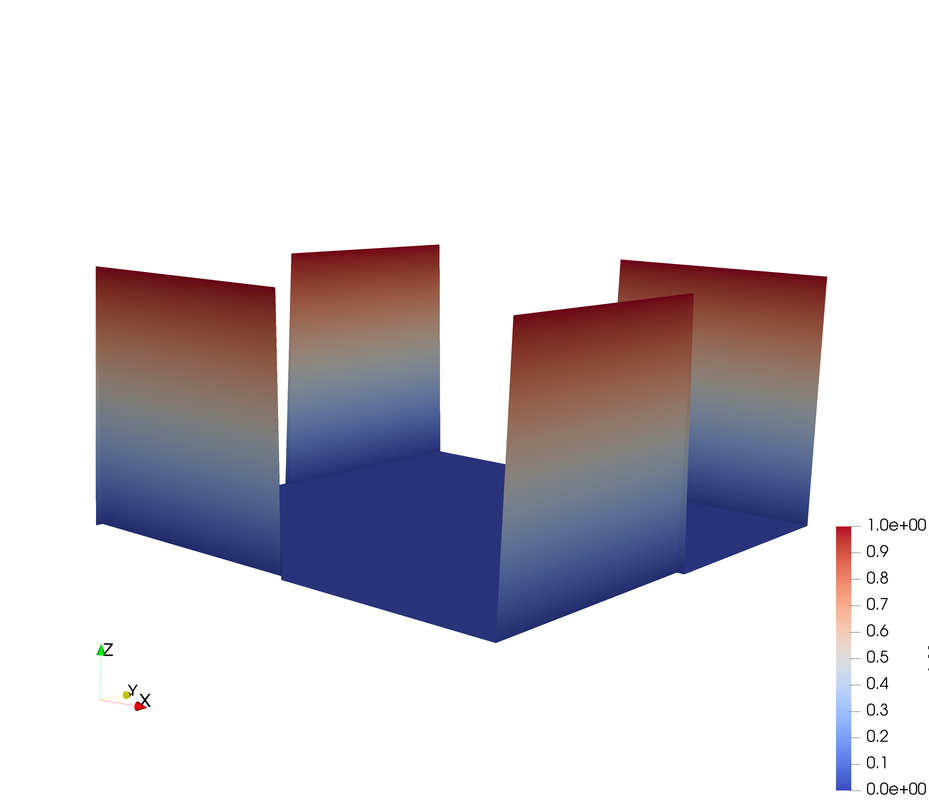}
        \caption{$\epsilon = 10^{-7}$.}
    \end{subfigure}%
    \caption{\label{fig:solutions-non-hom-dir} Elevations of the
      approximation to (\ref{eq:discbc}) for fixed $h$ and varying
      $\epsilon$. We notice there are no apparent oscillations even for
      particularly small values of $\epsilon$.}
\end{figure*}

\subsection{A solution with an interior layer}

Consider the problem
\begin{equation}
  \begin{split}
    -\epsilon \Delta u + u &= f \text{ in } \W\,,
    \\
    u &= 0 \text{ on } \partial \W,
  \end{split}
\end{equation}
with
\begin{equation}
  f =
  \begin{cases}
    \frac 12 \text{ in } [\tfrac 14, \tfrac 34]^2  
    \\
    1 \text{ otherwise.}
  \end{cases}
\end{equation}
In this case the solution is expected to achieve a local minimum on
the interior. We fix $h \approx 0.02$ and examine the solution for
$\epsilon = \{10^{-4}, 10^{-7}\}.$ We compare the standard finite element
solution and the approximation given by (\ref{eq:iterativeFEM}) in
Figure \ref{fig:intmin}. Notice that the  plain Galerkin solution has
oscillations near the boundary layer that become extreme for $\epsilon
\ll 1$, which are totally removed by the current method. In addition, for $k=2$, there
are noticeable undershoots around the interior layer, which are totally removed by
the current method.

\begin{figure*}[h!]
    \centering
    \begin{subfigure}[t]{0.3\textwidth}
        \includegraphics[width=\textwidth]{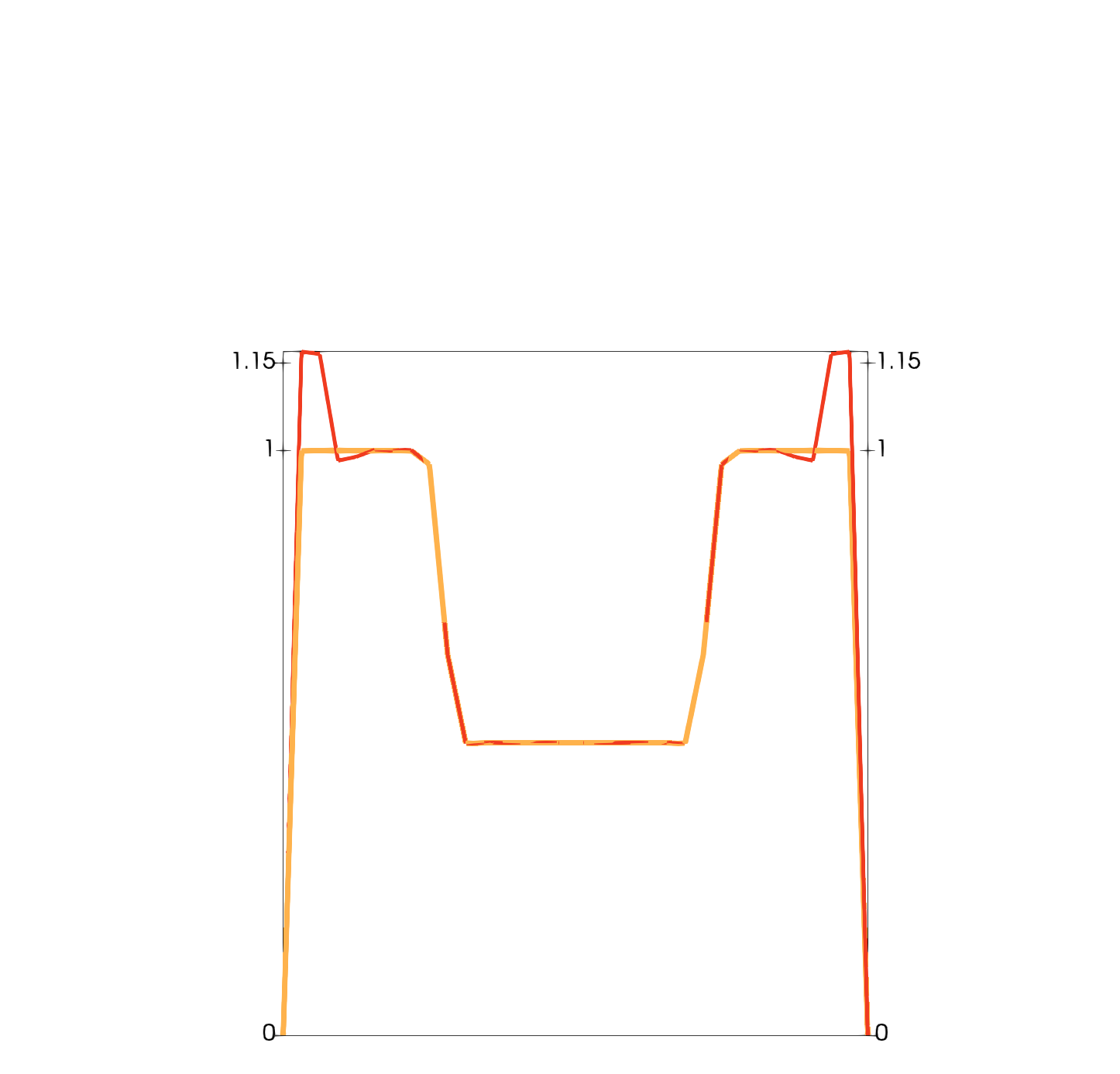}
        \caption{$\epsilon = 10^{-4}$, $k=1$.}
    \end{subfigure}%
    \begin{subfigure}[t]{0.3\textwidth}
        \includegraphics[width=\textwidth]{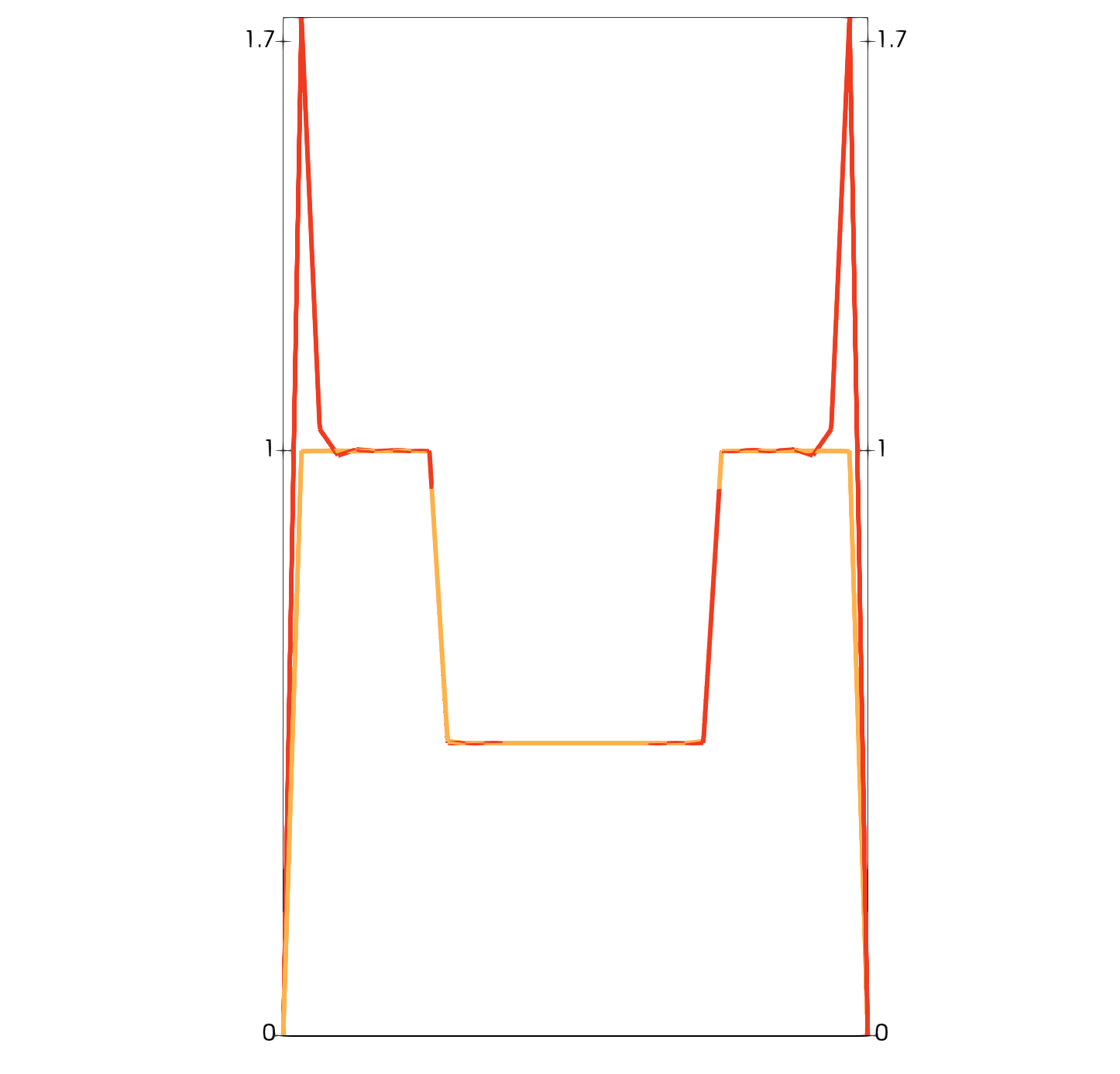}
        \caption{$\epsilon = 10^{-7}$, $k=1$.}
    \end{subfigure}
    \begin{subfigure}[t]{0.3\textwidth}
      \includegraphics[width=\textwidth]{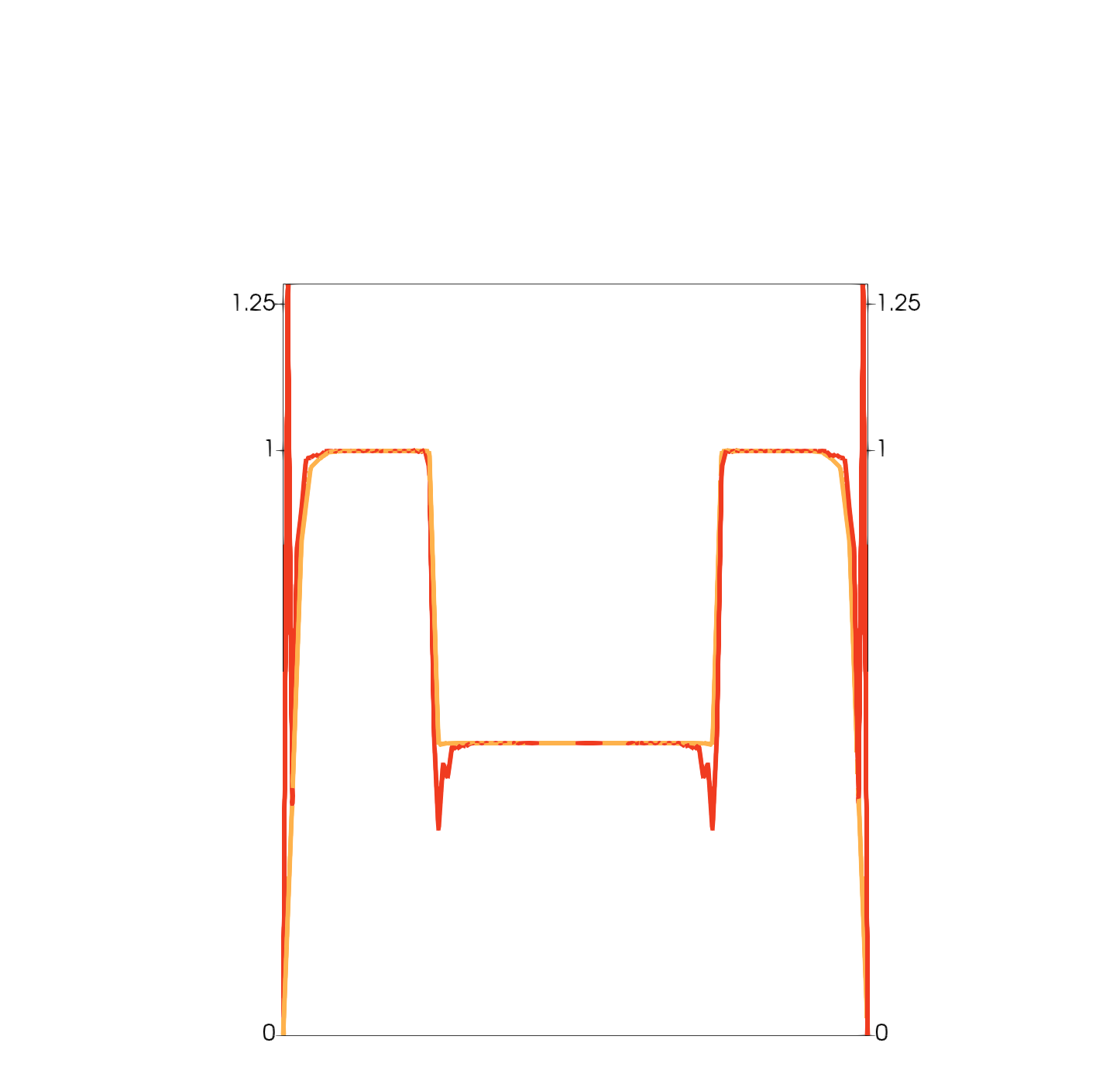}
        \caption{$\epsilon = 10^{-7}$, $k=2$.}
    \end{subfigure}
    \caption{\label{fig:intmin} A cross section taken about the $x=y$
      plane of the finite element solution, coloured red presenting
      oscillations at the boundary layer, and the approximation given
      by (\ref{eq:iterativeFEM}), coloured green with no oscillations
      at the boundary. Panels (A) and (B) are computed using $k=1$,
      while (C) depicts the approximations obtained using $k=2$.}
\end{figure*}

\subsection{Anisotropic diffusion with nonlinear reaction}

Consider the domain $\W = \W_1 \backslash \W_2$ with $\W_1 = (0,1)^2$,
$\W_2 = [\tfrac 49, \tfrac 59]^2$ and the problem
\begin{equation}
  \label{eq:nl}
  \begin{split}
    -\text{div}\qp{\epsilon \calD \nabla u} + u^3 &= f \text{ in } \W\,,
    \\
    u &=
    \begin{cases}
      0 \text{ on }\partial \W_1
      \\
      2 \text{ on }\partial \W_2
    \end{cases},
  \end{split}
\end{equation}
with $\epsilon = 10^{-5}$,
\begin{equation}
  \calD = \mathcal R \mathcal A \mathcal R^T,
  \quad
  \mathcal R
  = 
  \begin{bmatrix}
    \cos(\theta)& \sin(\theta)
    \\
    -\sin(\theta)& \cos(\theta)
  \end{bmatrix}
  \quad
  \mathcal A
  =
  \begin{bmatrix}
    100 & 0
    \\
    0& 1
  \end{bmatrix},
\end{equation}
and $\theta = -\tfrac \pi 6$. This is a challenging realisation of
(\ref{eq:pde}) with an anisotropic diffusion coefficient and nonlinear
reaction term, with $p=4$, posed over a nonconvex domain. In Figure
\ref{fig:nonlinear} we show the finite element solution, the {bound-preserving} solution and contour plots highlighting the oscillatory
nature of the finite element solution for this problem.

\begin{figure*}[h!]
    \centering
    \begin{subfigure}[t]{0.5\textwidth}
        \centering
        \includegraphics[width=1.\textwidth]{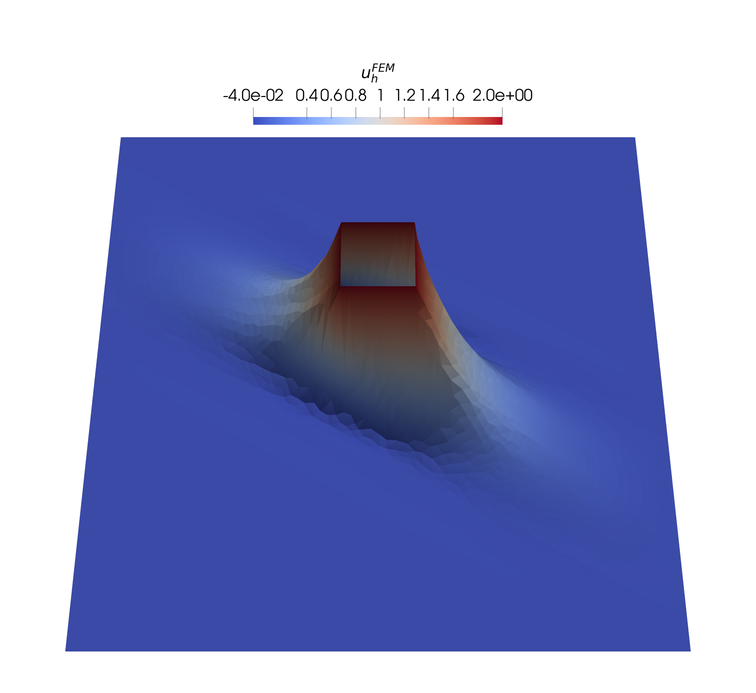}
        \caption{Finite element solution.}
    \end{subfigure}%
    ~
    \begin{subfigure}[t]{0.5\textwidth}
        \centering
        \includegraphics[width=1.\textwidth]{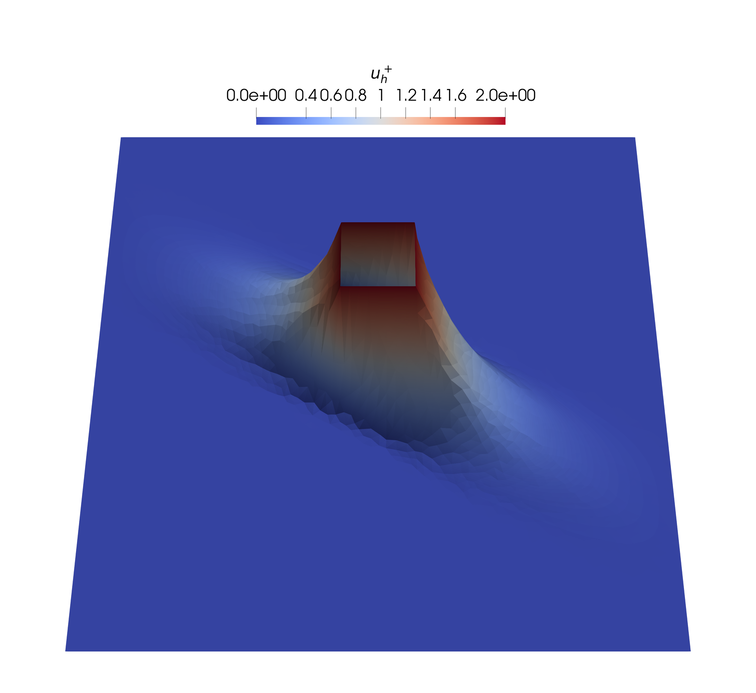}
        \caption{Bound preserving solution.}
    \end{subfigure}
    \\
    \begin{subfigure}[t]{0.5\textwidth}
        \centering
        \includegraphics[width=1.\textwidth]{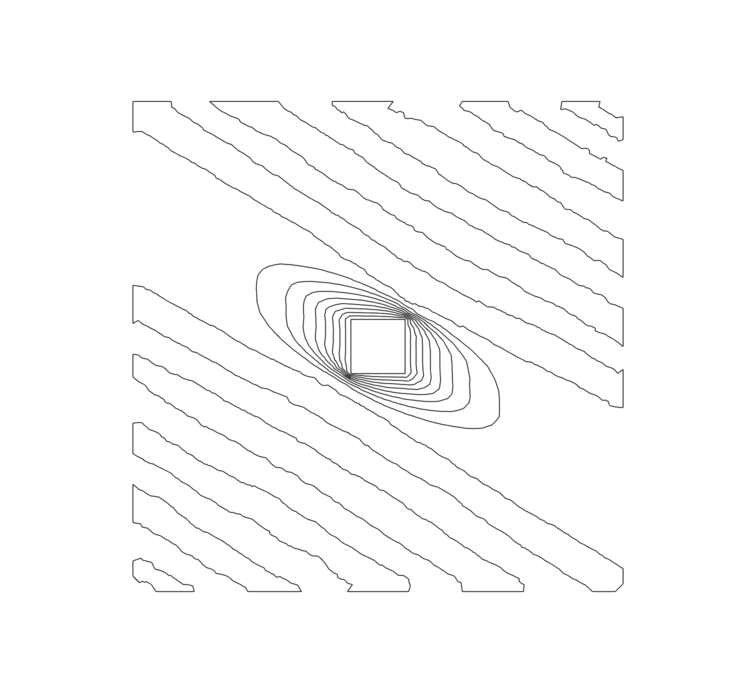}
        \caption{Contour plot of the finite element solution.}
    \end{subfigure}%
    ~
    \begin{subfigure}[t]{0.5\textwidth}
        \centering
        \includegraphics[width=1.\textwidth]{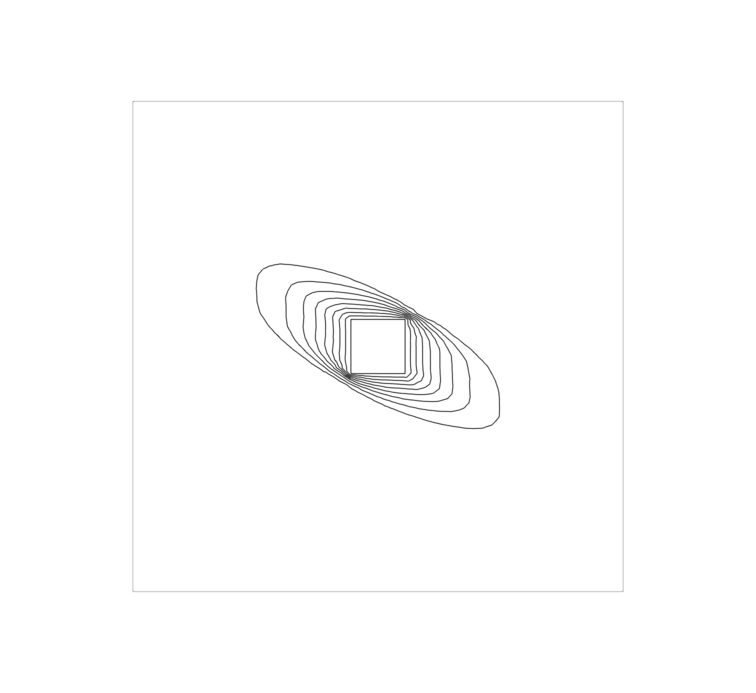}
        \caption{Contour plot of the bound preserving solution.}
    \end{subfigure}
    \caption{\label{fig:nonlinear} Approximations to the problem
      (\ref{eq:nl}) with $h \approx 0.02$ over a Delaunay mesh. Notice
      the finite element approximation exhibits oscillatory behaviour
      in contrast to the bound preserving method.}
\end{figure*}

\section{Concluding remarks}
We proposed an inexpensive and simple way to impose hard bounds on the range of a finite element solution. This is achieved through the definition of a nonlinear stabilised Galerkin approach, which is designed to provide the orthogonal projection into the closed convex set of physically admissible solutions satisfying hard bounds.  In an effort to highlight the key ideas, we have confined the presentation to linear and monotone semilinear reaction-diffusion equations. We stress, however, that the framework is general enough to allow other model problems, as well as bound-preserving variants of other known finite element methods, to be constructed following the methodology presented here. In particular, an interesting extension of the proposed methodology to convection-dominated problems is both relevant and, we believe, within reach.  Moreover, the extension to discontinuous Galerkin methods posed on general polygonal/polyhedral meshes is also conceivable. Both these, and other,  topics are ongoing,  and will be discussed elsewhere.

\section*{Acknowledgements} The work of GRB has been funded by the Leverhulme Trust through the Research Fellowship No. RF-2019-510. The work of AV is supported by the Italian GNCS and the MIUR PRIN 2017 NA-FROM-PDEs.

\bibliographystyle{abbrv} 
\bibliography{BGPV}
\end{document}